\newcommand{\be}{\begin{equs}}
\newcommand{\ee}{\end{equs}}
\numberwithin{equation}{section}
\theoremstyle{plain}
\newtheorem{theorem}{Theorem}
\newcommand{\x}{\textbf{x}}
\newcommand{\y}{\textbf{y}}
\title{Dynamics of coordinate ascent variational inference: A case study in 2D Ising models}
\author[1]{Sean Plummer\thanks{splumme@tamu.edu}}
\author[1]{Debdeep Pati\thanks{debdeep@stat.tamu.edu}}
\author[1]{Anirban Bhattacharya\thanks{anirbanb@stat.tamu.edu}}
\affil[1]{Department of Statistics,  Texas A\&M University, College Station, Texas, 77843, USA}
\begin{document}
\maketitle

\begin{abstract}
Variational algorithms have gained prominence over the past two decades as a scalable computational environment for Bayesian inference. In this article, we explore tools from the dynamical systems literature to study convergence of coordinate ascent algorithms for mean field variational inference.  
Focusing on the Ising model defined on two nodes, we fully characterize the dynamics of the sequential coordinate ascent algorithm and its parallel version. We observe that in the regime where the objective function is convex, both the algorithms are  stable and exhibit convergence to the unique fixed point.  Our analyses reveal interesting {\em discordances} between these two versions of the algorithm in the region when the objective function is non-convex.  In fact, the parallel version exhibits a periodic oscillatory behavior which is absent in the sequential version. Drawing intuition from the Markov chain Monte Carlo literature, we {\em empirically} show that a parameter expansion of the Ising model, popularly called as the Edward--Sokal coupling,  leads to an enlargement of the regime of convergence to the global optima. 

\end{abstract}

{\bf Keywords:}
Bifurcation; Dynamical Systems; Edward--Sokal Coupling; Mean-field; Kullback--Leibler divergence; Variational Inference

\section{Introduction}
Variational Bayes (VB) is now a standard tool to approximate computationally intractable posterior densities. Traditionally this computational intractability has been circumvented using sampling techniques such as Markov chain Monte Carlo (MCMC). MCMC techniques are prone to be computationally expensive for high dimensional and complex hierarchical Bayesian models, which are prolific in modern applications. VB methods, on the other hand, typically provide answers orders of magnitude faster as they are based on optimization. Introduction to VB can be found in chapter 10 of \cite{bishop2006pattern} and chapter 33 of \cite{mackay2003information}. Excellent recent surveys can be found in \cite{Blei_2017,zhang2018advances}. 

The objective of VB is to find the best approximation to the posterior distribution from a more tractable class of distributions on the latent variables that is well-suited to the problem at hand. The best approximation is found by minimizing a divergence between the posterior distribution of interest and a class of distributions that are computationally tractable. The most popular choices for the discrepancy and the approximating class are the Kullback--Leibler (KL) divergence and the class of product distributions, respectively. This combination is popularly known as mean field variational inference, originating from mean field theory in physics \cite{parisi1988statistical}. Mean-field inference has percolated through a wide variety of disciplines, ranging from statistical mechanics, electrical engineering, information theory, neuroscience, cognitive sciences \cite{opper2001advanced} and more recently in deep neural networks \cite{gabrie2020mean}. While computing the KL divergence is intractable for a large class of distributions, reframing the minimization problem for maximizing the evidence lower bound (ELBO) leads to efficient algorithms. In particular, for conditionally conjugate-exponential family models, the optimal distribution for mean field variational inference can be computed by iteration of closed form updates. These updates form a coordinate ascent algorithm known as coordinate ascent variational inference (CAVI) \cite{bishop2006pattern}. 

Research into the theoretical properties of variational Bayes has exploded in the last few years. Recent theoretical work focuses on statistical risk bounds for variational estimate obtained from VB \cite{Alquier2016, pati2018statistical, yang2020alpha, cherief2018consistency}, asymptotic normality of VB posteriors \cite{wang2019frequentist} and extension to model misspecification  \cite{Alquier2016, wang2019variational}.
While much of the recent theoretical work focuses on statistical optimality guarantees, there has been less work studying the convergence of the CAVI algorithms employed in practice. Convergence of CAVI to the global optima is only known in special cases that depend heavily on model structure for normal mixture models \cite{wang2005inadequacy, wang2006convergence}, stochastic block models \cite{zhang2017theoretical,Mukherjee2018}, and topic models \cite{ghorbani2018instability}, as well as under special restrictions of the parameter regime for Ising models \cite{jain2018meanfield, koehler2019fast}. Convergence properties of the CAVI algorithm is still largely an open problem. 

The goal of this work is to suggest a general systematic framework for studying convergence properties of CAVI algorithms. By viewing CAVI as a discrete time dynamical system, we can leverage dynamical systems theory to analyze the convergence behavior of the algorithm and bifurcation theory to study the types of changes that solutions can undergo as the various parameters are varied. For sake of concreteness, we focus on the 2d Ising model. Our contribution to the literature is as follows: We provide a complete classification of the dynamical properties of the the traditional sequential update CAVI algorithm as well as a parallelized version of the algorithm using dynamical systems and bifurcation theory on the Ising models. Our findings show that the sequential CAVI algorithm and the parallelized version have different convergence properties. Additionally, we numerically investigate the convergence of the CAVI algorithm on the Edward--Sokal coupling, a generalization of the Ising model. Our findings suggest that couplings/parameter expansion may provide a powerful way of controlling the convergence behavior of the CAVI algorithm, beyond the immediate example considered here.

\section{Mean-field variational inference and the coordinate ascent algorithm}\label{ssec:rev}
In this section, we briefly introduce mean-field variational inference for 
a  target distribution in the form of a Boltzmann distribution with potential function $\Psi$, 
\begin{eqnarray*}
p(\x) = \frac{\exp\{ \Psi(\x) \}}{\mathcal{Z}}, \quad \x \in \mathcal{X},
\end{eqnarray*}
where $\mathcal{Z}$ denotes the intractable normalizing constant. The above representation encapsulates both posterior distributions that arise in Bayesian inference, where $\Psi$ is the log-posterior upto constants, as well as probabilistic graphical models such as the Ising and Potts models. For instance, $\Psi(x) = \beta \sum_{u\sim v}J_{uv}x_u x_v +\beta \sum_{u} h_u x_u$ for the Ising model; see the next section for more details. Much of the complications in inference arise from the intractability of the normalizing constant $\mathcal{Z}$, which is commonly referred to as the free energy in probabilistic graphical models, and the marginal likelihood or evidence in Bayesian statistics. Variational inference aims to mitigate this problem by using optimization to find a best approximation $q^*$ to the target density $p$ from a class $\mathcal{F}$ of variational distributions over the parameter vector $\x$,
\begin{eqnarray}\label{eq:VI_optimization}
q^* = \arg\min_{q\in \mathcal{F}} D( q \,||\, p)
\end{eqnarray}
where $D( q \,||\, p)$ denotes the Kullback-Leibler (KL) divergence between $q$ and $p$. The complexity of this optimization problem is largely determined by the choice of variational family $\mathcal{F}$. The objective function of the above optimization problem is intractable because it also involve the evidence $\mathcal{Z}$. We can work around this issue by rewriting the KL divergence as 
\begin{eqnarray}\label{eq:equivalent_KL}
D( q \,||\, p) = \mathbb{E}_q[\log q] -\mathbb{E}_q[\log p] + \log \mathcal{Z}
\end{eqnarray}
where $\mathbb{E}_q$ denotes the expectation with respect to $q(\x)$.  Rearranging terms,
\begin{eqnarray}
\log \mathcal{Z} &= D( q \,||\, p)  +\mathbb{E}_q[\log p] -\mathbb{E}_q[\log q] \\
&\geq \mathbb{E}_q[\log p] -\mathbb{E}_q[\log q] :=\text{ELBO}(q).
\end{eqnarray}
The acronym ELBO stands for evidence lower bound and the nomenclature is now apparent from the above inequality. Notice from equation \eqref{eq:equivalent_KL} that maximizing the ELBO is equivalent to minimizing the KL divergence. By maximizing the ELBO we can solve the original variational problem while by-passing the computational intractability of the evidence.  

As mentioned above, the choice of variational family controls both the complexity and accuracy of approximation. Using a more flexible family achieves a tighter lower bound but at the cost of having to solve a more complex optimization problem. A popular choice of family that balances both flexibility and computability is the mean-field family. 
Mean-field variational inference refers to the situation when $q$ is restricted to the product family of densities over the parameters,  
\small \begin{eqnarray}\label{eq:MF}
    \mathcal{F}_{\text{MF}} :\, = \big\{ q(\x) = q_1(x_1) \otimes \cdots \otimes q_n(x_n) \,\, \text{for probability measures} \,\,  q_j, j=1, \ldots, n\big\},
\end{eqnarray}
\normalsize

\begin{algorithm}[H]
    \SetAlgoLined
    \KwIn{Model $p(\x)=\exp(\Psi(\x)-\log\mathcal{Z})$}
    \KwOut{A variational density $q(\x)=\prod_{j=1}^nq_j(x_j)$}
    \textbf{Initialize:} variational densities $q_j(x_j)$\;
    
    \While{$\text{ELBO}(q)$ not converged}{
        \For{$j\,\in\{1,\ldots,n\}$}{
        $q_j(x_j) \propto \exp\left\{\mathbb{E}_{-j}\left[ \log p(\x_j,\x_{-j})\right]\right\} $
        }
        Compute $\text{ELBO}(q)=\mathbb{E}_q[\log p(\x)] -\mathbb{E}_q[\log q(\x)] $
    }
    \Return{$q(\x)$}
\caption{Coordinate ascent variational inference ({ \sc CAVI})}
\label{algo:CAVI}
\end{algorithm}

 The coordinate ascent variational inference (CAVI) algorithm (refer to Algorithm \ref{algo:CAVI}) is a learning algorithm that optimizes the ELBO over the mean-field family $\mathcal{F}_{\text{MF}}$.  At each time step $t\geq 1$, the CAVI algorithm iteratively updates the current mean field marginal distribution $q_j^{(t)}(x_j)$ by maximizing the ELBO over that marginal while keeping the other marginals $\{q_\ell^{(t)}(x_\ell)\}_{\ell \neq j}$ fixed at their current values. Formally, we update the current distribution $q^{(t)}(\x)$ to $q^{(t+1)}(\x)$ by the updates,
 \begin{eqnarray*}
 q_1^{(t+1)}(x_1) &=& \arg\max_{q_1} \text{ELBO}(q_1\otimes q_2^{(t)}\otimes \cdots \otimes q_n^{(t)})\\
 q_2^{(t+1)}(x_2) &=& \arg\max_{q_2} \text{ELBO}(q_1^{(t+1)}\otimes q_2\otimes q_3^{(t)}\otimes \cdots \otimes q_n^{(t)})\\
 & \vdots& \\
 q_n^{(t+1)}(x_n) &=& \arg\max_{q_n} \text{ELBO}(q_1^{(t+1)}\otimes \cdots \otimes q_{n-1}^{(t+1)}\otimes q_n). 
 \end{eqnarray*}
 
The objective function $\text{ELBO}(q_1\otimes \cdots \otimes q_n)$ is concave in each of the arguments individually (although it is rarely jointly concave), so these individual maximization problems have unique solutions. The optimal update for the $j$th mean field variational component of the model has the closed form,  
\begin{eqnarray*}
q_j^*(\x_j) &\propto& \exp\left\{\mathbb{E}_{-j}\left[ \log p(\x_j,\x_{-j})\right]\right\}
\end{eqnarray*}
where the expectation $\mathbb{E}_{-j}$ are taken with respect to the distribution $\prod_{i\neq j}q_i(\x_i)$. Furthermore the update steps of the algorithm is monotone as each step of the CAVI iterates increases the objective function 
\begin{eqnarray*}
\text{ELBO}(q_1^{(t+1)}\otimes q_2^{(t+1)}\otimes \cdots \otimes q_n^{(t+1)}) &\geq \text{ELBO}(q_1^{(t+1)}\otimes q_2^{(t+1)}\otimes \cdots \otimes q_n^{(t)}) \geq \cdots \geq \\ & \text{ELBO}(q_1^{(t)}\otimes q_2^{(t)}\otimes \cdots \otimes q_n^{(t)}).
\end{eqnarray*}
  
For parametric models, the sequential updates of the variational marginal distributions in the CAVI algorithm is done by a sequential system updates of the variational parameters of these distributions. The CAVI algorithm updates for parametric models induce a discrete time dynamical system of the parameters. Clearly, convergence of the CAVI algorithm can be framed in terms of this induced discrete time dynamical system. As discussed before,  the ELBO is generally a non-convex function and hence the CAVI algorithm will only guaranteed to converge to a local optimum of the system. It is also not clear how many local optima (or fixed points) the system has or whether the algorithm always settle on a single fixed point, diverge away from the fixed point or  cycle between multiple fixed points.  These questions translate to questions about the existence and stability of fixed points of the induced dynamical system. We are also interested in how the behavior of the CAVI algorithm could possibly change as we vary the parameters of the model. This translates to questions about the possible bifurcations of the induced dynamical system. In Section \ref{sec:cavi_update}, we formally introduce the Ising model and its mean-field variational inference. 

\section{CAVI in Ising model}
\label{sec:cavi_update}
We first briefly review the definition of an Ising model. The Ising model was first introduced as a model for magnetization in statistical physics but has found many applications in other fields; see \cite{friedli2017statistical} and references therein. The Ising model is a probability distribution on the hypercube $\{\pm1\}^n$ given by
\begin{eqnarray} \label{eq:ising2}
p(\x)&\propto&\exp\left[\beta\sum_{u\sim v} J_{uv}x_u x_v + \beta \sum_u h_u x_u\right],
\end{eqnarray}
where the interaction matrix $J$ is a symmetric real $n \times n$ matrix with zeros on the diagonal, $h$ is a real $n$-vector that represents the external magnetic field, and $\beta$ is the \textit{inverse temperature} parameter. The model is said to be \textit{ferromagnetic} if $J_{uv}\geq 0$ for all $u,v$ and \textit{anti-ferromagnetic} if $J_{uv}<0$ for all $u,v$. The normalizing constant or the \textit{partition function} of the Ising model is 
\begin{eqnarray*}
\mathcal{Z}&=&\sum_{\x\in\{\pm1\}^n}\exp\left[\beta\sum_{u\sim v} J_{uv}x_u x_v + \beta \sum_u h_u x_u \right]. 
\end{eqnarray*}
Refer to Chapter 31 of \cite{mackay2003information} for an excellent review of Ising models.

\subsection{Mean field variational inference in Ising model}\label{ssec:mf}
Here we provide a derivation of the CAVI update function for the Ising model, focusing on the two nodes ($n=2$) case for simplicity and analytic tractability. 
\begin{figure}[htbp!]
    \centering
    \includegraphics[width=12cm]{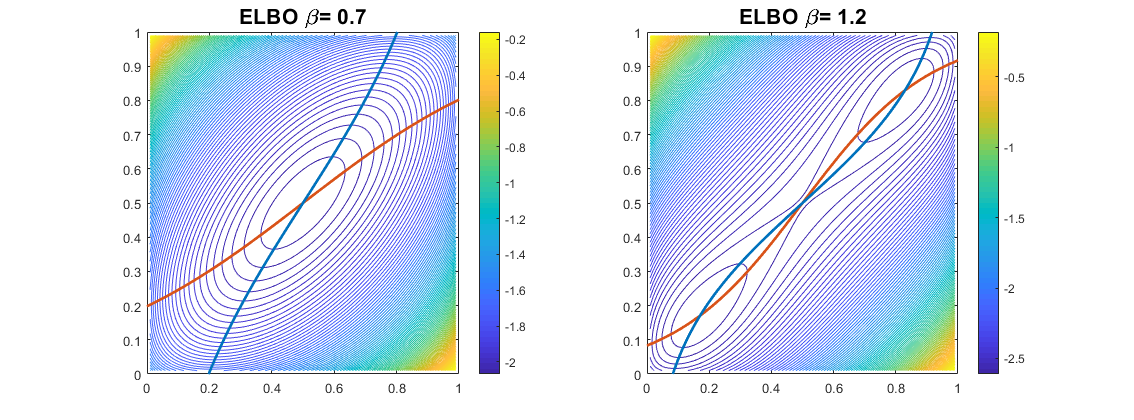}
    \caption{Left: A contour plot of the ELBO function for the Ising model with $\beta=0.7$ along with the marginal gradient functions. The y-marginal is orange and x-marginal is blue. For $0\leq \beta\leq 1$ there is one minimum of the ELBO. The circle patterns seen at the boundary are artifacts of scaling of the image. Right: A contour plot of the ELBO function for the Ising model with $\beta=1.2$ along with the marginal gradient functions. The y-marginal is orange and x-marginal is blue. For $1< \beta$ there are two minima of the ELBO. }
    \label{fig:CAVI_contour05}
\end{figure}
\begin{figure}[htbp!]
    \centering
    \includegraphics[width=12cm]{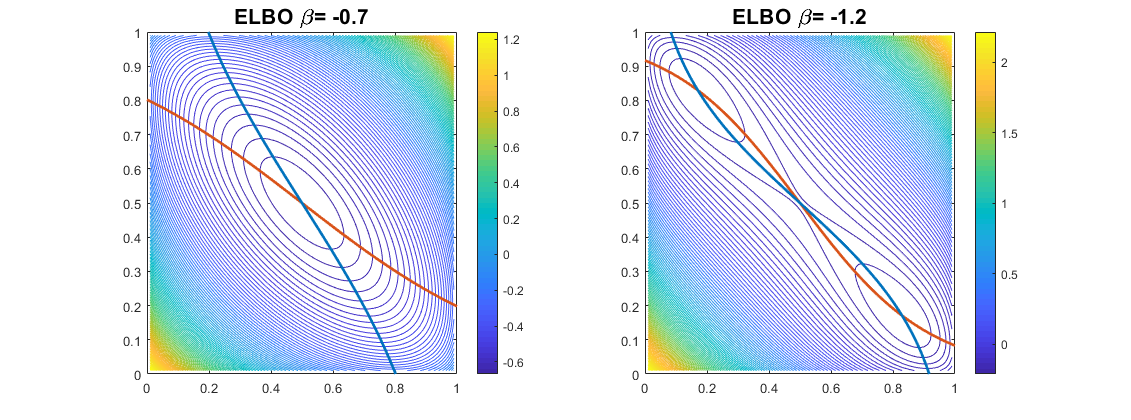}
    \caption{Left: A contour plot of the ELBO function for the Ising model with $\beta=-0.7$ along with the marginal gradient functions. The y-marginal is orange and x-marginal is blue. For $-1\leq \beta\leq 0$ there is one minimum of the ELBO. The circle patterns seen at the boundary are artifacts of scaling of the image. Right: A contour plot of the ELBO function for the Ising model with $\beta=-1.2$ along with the marginal gradient functions. The y-marginal is orange and x-marginal is blue. For $\beta< -1$ there are two minima of the ELBO.}
    \label{fig:CAVI_contour12}
\end{figure}
 Notice $\log p(\x):= \beta \mathcal{H}(\x)=\beta \sum_{u\sim v}J_{uv}x_u x_v +\beta\sum_{u} h_u x_u$.  
In this case we have the Ising model on two spins with $\x=(x_1,x_2)$ and influence matrix $J$ with off diagonal term $J_{12}$ and external magnetic field $h=(h_1,h_2)=(0,0)$. 
From the general framework in Section 2, the CAVI updates are given by,
$$
q_j^*(x_j)\propto \exp\left\{\mathbb{E}_{-j}\left[ \log p(x_j,x_{-j})\right]\right\}. 
$$
Equivalently, the same updates are obtained by setting the gradient of the ELBO as a function of $(x_1, x_2)$ to the $(0, 0)'$ vector. 
An illustration of the ELBO and the gradient functions for various values of $\beta$ is in Figures \ref{fig:CAVI_contour05} and \ref{fig:CAVI_contour12} respectively. 

Since $q_1^*$ and $q_2^*$ are two point distributions, it is sufficient to keep track of the mass assigned to 1. Simplifying, 
\begin{eqnarray*}
q_1^*(x_1)&\propto& \exp\left\{\mathbb{E}_{2}\left[ \log p(x_1,x_{2})\right]\right\}\\
&=&\exp\left\{\beta\mathcal{H}(x_1,x_2=1) q_2(x_2=1)+\beta\mathcal{H}(x_1,x_2=-1) q_2(x_2=-1)\right\}\\
&=&\exp\left\{(\beta J_{12}x_1+ \beta h_1 x_1 + \beta h_2) \xi +(-\beta J_{12}x_1 + \beta h_1x_1 - \beta h_2)(1-\xi) \right\}\\
&=&\exp\left\{(2\xi-1)(\beta J_{12}x_1+ \beta h_2) + \beta h_1 x_1 \right\},
\end{eqnarray*}
where $\xi=q_2(x_2=1)$.
Therefore 
\begin{eqnarray*}
q_1^*(x_1=1)&=&\frac{\exp\left\{(2\xi-1)(\beta J_{12} + \beta h_2) + \beta h_1\right\}}{\exp\left\{(2\xi-1)(\beta J_{12} + \beta h_2)+\beta h_1 \right\}+\exp\left\{(2\xi-1)(-\beta J_{12}  + \beta h_2) - \beta h_1 \right\}}\\
&=&\frac{1}{1+\exp\left\{-2\beta J_{12}(2\xi-1) - 2\beta h_1 \right\}}.
\end{eqnarray*}
Similarly denoting $\zeta=q_1(x_1=1)$,
\begin{eqnarray*}
q_2^*(x_2=1)&=&\frac{\exp\left\{(2\zeta-1)(\beta J_{12}+\beta h_1 ) + \beta h_2\right\}}{\exp\left\{(2\zeta-1)(\beta J_{12}+\beta h_1 )+ \beta h_2  \right\}+\exp\left\{(2\zeta-1)(-\beta J_{12}+\beta h_1 ) - \beta h_2 \right\}}\\
&=&\frac{1}{1+\exp\left\{-2\beta J_{12}(2\zeta-1)- 2\beta h_2\right\}}. 
\end{eqnarray*}

Let $\zeta_k$ (resp. $\xi_k$) denote the $k$th iterate of $q_1(x_1=1)$ (resp. $q_2(x_2=1)$) from the CAVI algorithm. To succinctly represent these updates, define the logistic sigmoid function 
\begin{eqnarray}\label{eq:logistic_sigmoid}
\sigma(u,\beta)=\frac{1}{1+e^{-\beta u}},\quad u\in[0,1],\quad \beta\in \mathbb{R}. \label{sigmoid}
\end{eqnarray}	
With this notation, we have, for any $k \in \mathbb{Z}_+$, 
\begin{eqnarray}
\begin{aligned}
\label{sequential.cavi}
\zeta_{k+1} &=\sigma(J_{12}(2\xi_{k}-1) + h_1, 2\beta)\\
\xi_{k+1} &=\sigma(J_{12}(2\zeta_{k+1}-1) + h_2, 2\beta). 
\end{aligned}
\end{eqnarray}
Without loss of generality we henceforth set $J_{12}= 1$. Under this choice the model is in the ferromagnetic regime for $\beta>0$ and the anti-ferromagnetic regime for $\beta<0$.

\section{Why the Ising model: a summary of our contributions}
 There are exactly two cases of the Ising model that have a full analytic solution for the free energy. They are i) the one dimensional line graph solved by Ernst Ising in his thesis \cite{ising1925beitrag} and  ii) the two dimensional case on the anisotropic square lattice when the magnetic field $h = 0$ by \cite{Onager1944}. Comparison with the mean field solution for the same models highlights the poor approximation quality of the mean field solution in low dimensions. To the best knowledge of the authors, there are no results in the literature detailing the properties of the mean field solution to the anti-ferromagnetic Ising model. Readers not familiar with the physics may wonder why this is the case. To explain this, there are two cases in the anti-ferromagnetic regime: one of the two regions is equivalent to the ferromagnetic case and in the other the mean field approximation is not a good approximation of the system. The first case occurs in a bipartite graph where a transformation of variables makes the antiferromagnetic regime equivalent to the ferromagnetic one \cite{toda2012statistical}. The other case can be seen on the triangle graph. By fixing the spin of one vertex as $1$ and the other as $-1$, the third vertex becomes geometrically frustrated and neither choice of spin lowers the energy level of the system and the two configurations are equivalent \cite{moessner2006geometrical}. In this case the mean field approximation gives completely incorrect answer and does not merit further investigation from a qualitative point of view. The physics literature is primarily concerned with using the mean field solutions to the Ising model to estimate important physical constants of the systems. These constants are only meaningful when the mean field solution provides a good approximation to the behavior of the system in large dimensions. It is known however that under certain conditions the mean field approximation does indeed converge to the true free energy of the system as the dimension increases \cite{jain2018meanfield, Basak2017}. 

Our work is focused on providing a rigorous methodology to analyze dynamics of the CAVI algorithm that can be applied to any model structure. All of the interesting behaviors exhibited by the CAVI algorithm fit into the classical mathematical framework of discrete dynamical systems and bifurcation theory. Specifically we use the Ising model as a simple and yet rich example to illustrate the potential of dynamical systems theory to analyze CAVI updates for mean field variational inference. The bifurcation of the ferromagnetic Ising model at the boundary of the Dobrushin regime is known \cite{mackay2003information, friedli2017statistical}, however a rigorous proof in terms of dynamical systems theory is missing in the literature.

There are several features that make the CAVI algorithm on the Ising model a nontrivial example worth investigating. The optimization problem arising from mean field variational inference on the Ising model is, in general, non-convex \cite{jain2018meanfield}. However it is straightforward to obtain sufficient conditions to guarantee the existence of a global optima. One such condition is that the inverse temperature $\beta$ is inside the Dobrushin regime, $\vert \beta \vert < 1$ \cite{jain2018meanfield}. Inside the Dobrushin regime, the CAVI update equations form a contraction mapping guaranteeing a unique global optima \cite{jain2018meanfield}. Outside of this regime the behavior of the CAVI algorithm is nontrivial. The CAVI solution to the Ising model with zero external magnetic field exhibits multiple local optima outside of the Dobrushin regime \cite{mackay2003information}. 

Our contributions to the literature are as follows. We utilize tools from dynamical systems theory to rigorously classify the full behavior of Ising model for the full parameter regime in dimension $n=2$ for both the sequential and parallel versions of CAVI algorithm. We show that the dynamical behavior of the sequential CAVI is not equivalent to the behavior of the parallel CAVI. Lastly we derive a variational approximation to the Edward-Sokal parameter expansion of the Potts and Random Cluster models and numerically study its convergence behavior under the CAVI algorithm. Our numerical results reveal that the parameter expansion leads to an enlargement of the regime of convergence to the global optima. In particular the Dobrushin regime is strictly contained in the expanded regime. This is compatible with the analogous results in Markov chain literature. See the introduction of \cite{blanca2018} for a well written summary of Markov chain mixing in the Ising model. 

\subsection{Statistical Significance of our results}
Although mean field variational inference has been routinely used in applications \cite{Blei_2017} for computational efficiency, it may not yield statistically optimal estimators \cite{ghorbani2018instability, wang2005inadequacy}. Mean field inference approximates the joint probability mass function in \eqref{eq:ising2} for $n=2$ by product of two distributions on $\{-1, 1\}$ in the sense of Kullback--Leibler divergence.  As discussed in \S \ref{ssec:mf}, minimizing this divergence is equivalent to maximizing an objective function, called the Evidence Lower Bound (ELBO). Our objective is to better understand the relation between the CAVI estimate and the global maximum of ELBO  in \eqref{eq:ising2} when $n=2$ and $h=0$.   Ideally, we want the global maximum of the ELBO to be a statistically reliable estimate. To understand this, let us denote $2 \times \mbox{Bernoulli}(p)-1$ by $\langle 1, -1; p \rangle$. As the marginal distributions of  \eqref{eq:ising2}  are both equal to $\langle 1, -1; 0.5 \rangle$, we want the ELBO to be maximized at this value. From an algorithmic perspective, we would like to ensure that the CAVI iterates converge to this global maximum. The synergy of these two phenomena leads to a successful variational inference method. We showed in this article that both these conditions can be violated in a certain regime of the parameter space in the context of Ising model on two nodes. Inside the Dobrushin regime ($-1\leq \beta \leq 1$), the global optima of the ELBO obtained from a mean field inference occurs at $(\langle 1, -1; 0.5 \rangle, \langle 1, -1; 0.5 \rangle)$ which is qualitatively the optimal solution. In this regime, the CAVI system converges to this global optimum irrespective of where the system is initialized.  Thus the mean field inference yields statistically optimal estimate and the algorithm is stable and convergent at this value. Unfortunately, this property deteriorates outside of the Dobrushin regime. Outside of the regime, the global maxima occur at two symmetric points which are different from $(\langle 1, -1; 0.5 \rangle, \langle 1, -1; 0.5 \rangle)$. These two symmetric points are equivalent under label switching. For example, when $\beta=1.2$ one of the optima is $(\langle 1, -1; 0.17071 \rangle, \langle 1, -1; 0.17071 \rangle)$ and the other is $(\langle 1, -1; 0.82928 \rangle, \langle 1, -1; 0.82928 \rangle)$. Notice this second optima is equivalent to the sign swapped version $(\langle -1, 1; 0.17071 \rangle, \langle -1, 1; 0.17071 \rangle)$. 

The original optima $(\langle 1, -1; 0.5 \rangle, \langle 1, -1; 0.5 \rangle)$  is actually a local minimum of the ELBO outside the Dobrushin regime. We illustrate in our theory that the CAVI system returns one of two global maxima of the objective function depending on the initialization of the algorithm. Although it is widely known that the statistical quality of the mean field inference is poor outside the regime, we show in addition that the algorithm itself exhibits erratic behavior and may not converge to the global maximizer of the ELBO for all initializations. Interestingly, outside the Dobrushin regime, the statistically optimal solution $(\langle 1, -1; 0.5 \rangle, \langle 1, -1; 0.5 \rangle)$ is a repelling fixed point of the CAVI system. This means that as the system is iterated, the current value of the system is pulled away from $(\langle 1, -1; 0.5 \rangle, \langle 1, -1; 0.5 \rangle)$ to the global maximum. 

A common technique to further improve computational time is the use of block updates in the CAVI algorithm, meaning groups of parameters are updated simultaneously.  We refer to this as the parallelized CAVI algorithm. This has been shown to work well in certain models \cite{Mukherjee2018}, but has not been investigated in a general setting. However, it turns out that block updating in the Ising model can lead to new problematic behaviors. Outside the Dobrushin regime, block updates can exhibit non-convergence in the form of cycling. As the system updates, it eventually switches back and forth between two points that yield the same value in the objective function. 

Parameter expansions (coupling) is another method of improving the convergence properties of algorithms. In the Markov chain theory for Ising models, it is well-known that mixing and convergence time are typically improved by using the Edward--Sokal coupling, a parameter expansion of the ferromagnetic Ising model \cite{guo2018}. Our preliminary investigation reveals that the convergence properties of the CAVI algorithm also exhibit a similar phenomenon.

\section{Main Results}\label{sec:main}
In this section, we analyze the behavior of the dynamical systems that one can form using the CAVI update equations and show that the behaviors of the systems differ.  Our results are heavily dependent on well-known techniques in dynamical systems. For readers unfamiliar with some of technical terminology below, we have included a primer on the basics of dynamical systems in Appendix \ref{sec:dynamical}.
 
Recall the system of sequential updates, which are the updates used in CAVI
\begin{eqnarray}
\zeta_{k+1}=\sigma(2\xi_k-1,2\beta), \quad \xi_{k+1}=\sigma(2\zeta_{k+1}-1,2\beta), \label{sys:seq.cavi}
\end{eqnarray}	
and the parallel updates 
\begin{eqnarray}
\zeta_{k+1}=\sigma(2\xi_k-1,2\beta), \quad \xi_{k+1}=\sigma(2\zeta_{k}-1,2\beta).\label{sys:par.cavi}
\end{eqnarray}	
We will show that these two systems are not topologically conjugate. We first state and prove some results on the dynamics of the sigmoid function \eqref{eq:logistic_sigmoid}. These results will as building blocks to study the dynamics of \eqref{sys:seq.cavi} and \eqref{sys:par.cavi}.   Phase change behavior of dynamical systems using the sigmoid and RELU activation functions are known in the literature in the context of generalization performance of deep neural networks \cite{oostwal2019hidden,ccakmak2020dynamical}.  In this section we present a complete proof of the bifurcation analysis of non-linear dynamical systems involving sigmoid activation function despite its connections with \cite{oostwal2019hidden,ccakmak2020dynamical}.  Our results in Section \ref{eq:sigdyn} provide a more complete picture of the behavior of the dynamics in all regimes and can be readily exploited to analyze the dynamics of \eqref{sys:seq.cavi} and \eqref{sys:par.cavi}. 
\subsection{Sigmoid Function Dynamics}\label{eq:sigdyn}
The proof for the dynamics of the CAVI algorithm on the Ising model relies heavily on the dynamics of the following sigmoid function and its second iterate
\begin{eqnarray}
\begin{aligned} \label{fn:sigmoid.2}
\sigma(2x-1,2\beta) \label{fn:sigmoid.1}, \quad 
\sigma(2\sigma(2x-1,2\beta)-1,2\beta).
\end{aligned}
\end{eqnarray}
Using numerical techniques, we solve for the number of fixed points of the system. The number of fixed points the function (\ref{fn:sigmoid.1}) depends on the magnitude of the parameter. In the ferromagnetic regime there is no periodic behavior, so there are no additional fixed points in (\ref{fn:sigmoid.2}) that are not fixed points in (\ref{fn:sigmoid.1}). For $\beta \leq 1$, there is a single fixed point at $x_*=1\slash 2$ and for $\beta>1$, there are 3 fixed points $c_0(\beta), 1\slash 2, c_1(\beta)$  in the interval $[0,1]$. These fixed points satisfy $0\leq c_0(\beta)< 1\slash 2<c_1(\beta)\leq 1$, $c_0(\beta)\to 0$ and $c_1(\beta)\to 1$ as $\beta\to \infty$. In the anti-ferromagnetic regime we see periodic behavior in the system; there are fixed points of (\ref{fn:sigmoid.2}) that are not fixed points of (\ref{fn:sigmoid.1}). For $\beta<1$, there is only one fixed point at $x_*=1\slash 2$ 
and a periodic cycle $C=\{c_0(\beta),c_1(\beta)\}$. Both $c_0(\beta),c_1(\beta)$ are fixed points of (\ref{fn:sigmoid.2}) and these points are the same fixed points from the ferromagnetic regime as (\ref{fn:sigmoid.2}) is an even function with respect to $\beta$. 

The following table denotes the values of the derivatives at the fixed point $1\slash2$ for $\beta=\pm 1$. 
\begin{table}[htbp]\label{table:derivatives}
{\footnotesize
  \caption{Derivatives of (\ref{fn:sigmoid.1}) and (\ref{fn:sigmoid.2}) at fixed point $x_*=1\slash 2$ for parameter value $\beta=\pm 1$. Here $\sigma^2$ is shorthand notation for (\ref{fn:sigmoid.2})}  \label{tab:derivative}
\begin{center}
  \begin{tabular}{|c|c|c|c|c|c|c|c|c|c|c|} \hline
    &$\sigma_x$ & $\sigma_{xx}$ & $\sigma_{xxx}$ &$ \sigma_{\beta}$ & $\sigma_{\beta x}$& $\sigma^2_x$ & $\sigma^2_{xx}$ & $\sigma^2_{xxx}$ & $\sigma^2_{\beta}$ & $\sigma^2_{\beta x}$ \\ \hline
    $\beta=1$ & 1 & 0 & -8 & 0 & $1\slash 2$ & 1  & 0 & -16 &  0 &  1\\ \hline
    $\beta=-1$ &  -1 & 0 & 8 & 0 & $1\slash 2$& 1 &  0 & -16 &  0 &  -1\\ \hline
  \end{tabular}
\end{center}
}
\end{table}

We now have enough information to provide a complete classification of the dynamics of the sigmoid function.
\begin{theorem}[Dynamics of sigmoid function]\label{thm:sigmoid.dynamics}
Consider the discrete dynamical system generated by (\ref{fn:sigmoid.1})
\begin{eqnarray*}
x\mapsto \sigma(2x-1,2\beta)=\frac{1}{1+e^{-2\beta(2x-1)}}.
\end{eqnarray*}
The full dynamics of the system (\ref{fn:sigmoid.1}) are as follows 
	\begin{enumerate}
		\item For $-1\leq \beta\leq 1$, the system has a single hyperbolic fixed point $x_*=1\slash 2$ which is a global attractor and there are no $p$-periodic points for $p\geq 2$. 
		\item For $\beta>1$, the system has one repelling hyperbolic fixed point $x_*=1 \slash 2$ and two hyperbolic stable fixed points $c_0$, $c_1$, with $0<c_0< 1\slash 2< c_1<1$, and stable sets $W^s(c_0)=[0,1\slash 2)$, $W^s(c_1)=(1\slash 2,1]$. There are no $p$-periodic points for $p\geq 2$. 
		\item For $\beta<-1$, the system has one unstable hyperbolic fixed point $x_*=1\slash2$, and a stable 2-cycle $\mathcal{C}=\{c_0,c_1\}$ with stable set $W^s(\mathcal{C})=[0,1\slash2)\cup (1\slash 2, 1]$, with $0<c_0<1\slash 2< c_1<1$. There are no $p$-periodic points for $p>2$. 
		\item For $\vert \beta \vert = 1$, the system has one non-hyperbolic fixed point at $x_*=1\slash 2$ which is asymptotically stable and attracting. 
	\end{enumerate}
The system undergoes a PD bifurcation at $\beta=-1$ and a pitchfork bifurcation at $\beta=1$. 
\end{theorem}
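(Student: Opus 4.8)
The plan is to reduce everything to the one-dimensional map $f_\beta(x)=\sigma(2x-1,2\beta)=\bigl(1+e^{-2\beta(2x-1)}\bigr)^{-1}$ on $[0,1]$ and to exploit two symmetries: the equivariance $f_\beta(1-x)=1-f_\beta(x)$ (a $\mathbb{Z}_2$-symmetry under $\tau(x)=1-x$, with $\mathrm{Fix}(\tau)=\{1/2\}$), and the identity $f_{-\beta}(x)=1-f_\beta(x)$, which together give $f_\beta\circ f_\beta=f_{-\beta}\circ f_{-\beta}$, i.e.\ the second iterate (\ref{fn:sigmoid.2}) is even in $\beta$. First I would record the consequences of equivariance ($1/2$ is a fixed point for all $\beta$, and $\partial_\beta f_\beta(1/2)=0$, $\partial_x^2 f_\beta(1/2)=0$), compute the multiplier $\partial_x f_\beta(1/2)=\beta$, and note that $f_\beta$ is strictly increasing for $\beta>0$, strictly decreasing for $\beta<0$, with $|f_\beta'|$ unimodal on $[0,1]$ and maximal ($=|\beta|$) only at $x=1/2$.

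For the ferromagnetic regime $\beta>0$ (parts 1--2 and the $\beta=1$ case of part 4), monotonicity of $f_\beta$ is the workhorse: every orbit is monotone, hence converges to a fixed point, and there are no $p$-periodic points with $p\ge2$. What remains is the exact fixed-point count, and the clean route is to parametrise a fixed point $x\ne1/2$ by $t=|2x-1|\in(0,1)$, which must satisfy $2\beta=\phi(t):=t^{-1}\log\frac{1+t}{1-t}$; I would show $\phi$ is a strictly increasing bijection $(0,1)\to(2,\infty)$, reducing this to the elementary inequality $\psi(t):=\frac{2t}{1-t^2}-\log\frac{1+t}{1-t}>0$ on $(0,1)$ (true since $\psi(0)=0$ and $\psi'(t)=\frac{4t^2}{(1-t^2)^2}>0$). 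This at once yields: for $0<\beta\le1$ only the fixed point $1/2$, globally attracting; for $\beta>1$ a single symmetric pair $c_0=\tfrac{1-t}{2}<\tfrac12<c_1=\tfrac{1+t}{2}$ with $c_0\to0$, $c_1\to1$ as $\beta\to\infty$; and, since the multiplier at $c_0$ equals $4\beta c_0(1-c_0)=\beta(1-t^2)$ and $\beta(1-t^2)<1$ is again precisely $\psi(t)>0$, that $c_0,c_1$ are hyperbolic attracting and $1/2$ repelling. The stable sets $[0,1/2)$ and $(1/2,1]$ follow from $f_\beta([0,1/2))\subseteq[0,1/2)$ plus monotone orbits (the second by applying $\tau$). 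The case $\beta=1$ is covered by the same monotone argument, with asymptotic stability because the leading correction at $1/2$ is a negative cubic ($\partial_x^3 f_1(1/2)=-8<0$).

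For the anti-ferromagnetic regime $\beta<0$ (part 3 and the $\beta=-1$ case of part 4), I would first note that $f_\beta$ decreasing makes $f_\beta-\mathrm{id}$ strictly decreasing, so $1/2$ is the unique fixed point of $f_\beta$ for all $\beta<0$; then pass to $f_\beta^2$, which is increasing and, by the symmetry above, equals $f_{|\beta|}^2$. For $-1<\beta<0$: $(f_\beta^2)'(1/2)=\beta^2<1$, and the ferromagnetic analysis with $|\beta|<1$ says $f_{|\beta|}^2=f_\beta^2$ has only $1/2$ as a fixed point and drives every orbit to it, so $1/2$ globally attracts $f_\beta$, and (an $f_\beta$-period-$p$ orbit with $p\ge2$ would be a nontrivial periodic orbit of the increasing map $f_\beta^2$) there are no periodic points of period $\ge2$. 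For $\beta<-1$: $(f_\beta^2)'(1/2)=\beta^2>1$ so $1/2$ is repelling; the ferromagnetic analysis with $|\beta|>1$ gives that $f_\beta^2=f_{|\beta|}^2$ has exactly the three fixed points $c_0(|\beta|)<\tfrac12<c_1(|\beta|)$, none but $1/2$ fixed by $f_\beta$, so $f_\beta$ interchanges $c_0,c_1$ and $\mathcal{C}=\{c_0,c_1\}$ is the unique $2$-cycle, with multiplier $(f_\beta^2)'(c_0)=(f_{|\beta|}'(c_0))^2\in(0,1)$ (stable), stable set $W^s(\mathcal{C})=[0,1/2)\cup(1/2,1]$ (the stable sets of the increasing map $f_\beta^2$, swapped by $f_\beta$), and no periodic points of period $>2$ (same reduction to $f_\beta^2$). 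The case $\beta=-1$ is handled as $\beta=1$ was, but applied to the increasing map $f_{-1}^2$, for which $(f_{-1}^2)'(x)<1$ off $x=1/2$ (a product of two factors in $[-1,0)$, equal to $-1$ only at $1/2$) and the cubic correction is negative ($\partial_x^3 f_{-1}^2(1/2)=-16<0$), so $1/2$ attracts $f_{-1}^2$, hence $f_{-1}$, and is asymptotically stable.

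Finally, for the two bifurcations I would invoke the standard normal-form theorems. At $\beta=1$: $f_\beta$ is $\mathbb{Z}_2$-equivariant for $\tau$, $1/2$ has multiplier $1$, and the derivative values of Table~\ref{tab:derivative} ($\sigma_\beta=0$, $\sigma_{xx}=0$, $\sigma_{xxx}\ne0$, $\sigma_{\beta x}\ne0$) are exactly the non-degeneracy and transversality hypotheses of the equivariant pitchfork theorem, with $\sigma_{xxx}<0$ giving the supercritical form that matches the stable branches $c_0,c_1$ for $\beta>1$. At $\beta=-1$: the multiplier $\beta$ of $1/2$ crosses $-1$ transversally ($\tfrac{d}{d\beta}\beta=1\ne0$), and with $\sigma_{xx}=0$, $\sigma_{xxx}\ne0$ (equivalently $\sigma^2_{xxx}=-16\ne0$) the period-doubling theorem applies and produces the stable $2$-cycle $\mathcal{C}$ for $\beta<-1$. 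The single genuinely nonroutine ingredient is the ferromagnetic fixed-point count --- that $f_\beta(x)=x$ has exactly one extra root on each side of $1/2$ when $\beta>1$ and none when $\beta\le1$ --- and the $t$-parametrisation is what collapses that count, the multiplier bound $\beta(1-t^2)<1$, and the limits $c_0\to0$, $c_1\to1$ all into the single lemma $\psi(t)>0$; getting that lemma and the monotonicity of $\phi$ right is the crux, everything else being symmetry bookkeeping, the soft ``increasing map $\Rightarrow$ monotone orbits'' principle, or citation of the textbook pitchfork and flip theorems.
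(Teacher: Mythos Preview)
Your proposal is correct and, in fact, more self-contained than the paper's own argument. The paper's proof proceeds by linear stability analysis at each fixed point, invoking Theorems~\ref{thm:hyperbolic.stability} and~\ref{thm:nonhyperbolic} for the hyperbolic and non-hyperbolic cases respectively, and then checking the derivative conditions of Theorems~\ref{thm:pitchfork} and~\ref{thm:pdb} for the two bifurcations; however, it takes the \emph{count} of fixed points (one for $|\beta|\le 1$, three for $|\beta|>1$) as an input established numerically before the theorem, and it asserts $|\sigma_x(2c_i-1,2\beta)|<1$ for the outer fixed points without supplying a proof.

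Your route differs in two substantive ways. First, the parametrisation $t=|2x-1|$ and the lemma $\psi(t)=\tfrac{2t}{1-t^2}-\log\tfrac{1+t}{1-t}>0$ give an \emph{analytic} proof of the fixed-point count (via $\phi$ a bijection $(0,1)\to(2,\infty)$) and simultaneously deliver the multiplier bound $\beta(1-t^2)<1$ at $c_0,c_1$, closing both gaps the paper leaves to numerics. Second, the symmetry $f_\beta^2=f_{-\beta}^2$ lets you reduce the entire anti-ferromagnetic analysis to the ferromagnetic one already done, which is cleaner than the paper's parallel case-by-case treatment. Your use of the soft principle ``increasing map $\Rightarrow$ monotone orbits $\Rightarrow$ no nontrivial periods'' is also more explicit than the paper's handling of the absence of $p$-periodic points. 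Where the two approaches coincide is the endgame: the same derivative table (Table~\ref{tab:derivative}) and the same textbook pitchfork and period-doubling theorems are invoked for the bifurcation claims.
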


\begin{proof}
We will break the proof up into three parts. The first part of the proof is a linear stability analysis of the system, the second part is a stability analysis of the periodic points in the system, and the third part is an analysis of the bifurcations of the system. We begin with a linear stability analysis of the system at each fixed point. For $\beta \leq 1$ the system has one fixed point $x_*=1\slash 2$ and for $\beta>1$ the system has three fixed points $c_0, 1\slash 2, c_1$. The derivative of $\sigma(2x-1,2\beta)$ is $\sigma_x(2x-1,2\beta)=-4\beta\sigma(2x-1,2\beta)(1-\sigma(2x-1,2\beta))$. 

\textbf{Fixed point $x_*=1\slash 2$} : The Jacobian of the system at the fixed point $x_*=1\slash 2$ is $\sigma_x(2x_*-1,2\beta)= \beta$. For $\beta\neq 1$, the fixed point $x_*=1\slash 2$ is hyperbolic and for $\beta=\pm 1$ the fixed point is non-hyperbolic. We classify the stability of the hyperbolic fixed point $x_*=1\slash 2$ using theorem \ref{thm:hyperbolic.stability}. For $\vert \beta \vert <1$ the fixed point $x_*=1\slash 2$ is globally attracting as $\vert \sigma_x(2x_*-1,2\beta)\vert <1$ and for $\vert \beta\vert >1$ the fixed point $x_*=1\slash 2$ is globally repelling as $\vert \sigma_x(2x_*-1, 2\beta_*)\vert > 1$. For $\beta=\pm 1$ we invoke theorem \ref{thm:nonhyperbolic} to check for stability of the fixed point. At $\beta=-1$ we have $\sigma_{x}(2x_*-1,2\beta)=-1$ and we need to check the Schwarzian derivative. The fixed point $x_*=1\slash 2$ is asymptotically stable for $\beta=-1$ by theorem \ref{thm:nonhyperbolic}, as $\mathcal{S}\sigma(2\sigma(2x-1,2\beta)-1,2\beta)\mid_{x=x_*}=-8$. For $\beta=1$ we have $\sigma_x(2x_*-1,2\beta)=1$ and we need to check the second and third derivatives at the fixed point. The fixed point $x_*=1\slash 2$ is asymptotically stable when $\beta=1$  by theorem \ref{thm:nonhyperbolic} as $\sigma_{xx}(2x_*-1,2\beta)=0$ and $\sigma_{xxx}(2x_*-1,2\beta)=-8$. 

\textbf{Fixed points $c_0,c_1$} : These fixed points have the same behavior so we have grouped them together in the analysis. When $\beta>1$ there are two additional fixed points $c_0,c_1$ of the system, both are attracting fixed points by theorem \ref{thm:hyperbolic.stability} as $\vert \sigma_x(2c_i-1, 2\beta)\vert < 1 $ for each $i=0,1$ and all $\beta>1$. The stable sets are $W^s(c_0)=[0,1\slash 2)$ and $W^s(c_1)=(1\slash 2, 1]$. 

\textbf{Periodic points}: For $\beta<-1$ we see the two cycle $\mathcal{C}=\{c_0,c_1\}$. Notice $\sigma(2c_0-1,2\beta)=c_1$ and $\sigma(2c_1-1,2\beta)=c_0$. This two cycle is stable since $c_0$ and $c_1$ are both stable fixed points of (\ref{fn:sigmoid.2}). The stable set is $W^s(\mathcal{C})=[0,1\slash2)\cup (1\slash 2, 1]$, \quad $0<c_0<1\slash 2< c_1<1$.

At $(x_*,\beta_*)=(1\slash 2, 1 )$ the system under goes a pitchfork bifurcation as it satisfies the conditions in theorem \ref{thm:pitchfork}
\begin{eqnarray*}
\sigma(2x_*-1,2\beta_*)=1\slash 2 \quad \sigma_x(2x_*-1,2\beta_*)=1\quad \sigma_{xx}(2x_*-1,2\beta_*)=0,\\
\sigma_\beta(2x_*-1,2\beta_*)=0\quad \sigma_{x\beta}(2x_*-1,2\beta_*)\neq0\quad \sigma_{xxx}(2x_*-1,2\beta_*)\neq0.
\end{eqnarray*}
Similarly at $(x_*,\beta_*)=(1\slash 2, -1)$ the system under goes a period doubling bifurcation as it satisfies the conditions in theorem \ref{thm:pdb}
\begin{eqnarray*}
\sigma(2x_*-1,2\beta_*)=1\slash 2\quad \sigma_x(2x_*-1,2\beta_*)=-1\quad \sigma_{xx}(2x_*-1,2\beta_*)=0,\\
\sigma_\beta(2x_*-1,2\beta_*)=0\quad \sigma_{x\beta}(2x_*-1,2\beta_*)\neq0\quad \sigma_{xxx}(2x_*-1,2\beta_*)\neq0. 
\end{eqnarray*}
\end{proof}

We can fully classify the dynamics of (\ref{fn:sigmoid.2}) using the above theorem. We omit the proof as it is similar to the proof of theorem \ref{thm:sigmoid.dynamics}.
\begin{theorem}
      \label{thm:sigmoid2.dynamics}
The full dynamics of the system (\ref{fn:sigmoid.2}) are as follows 
	\begin{enumerate}
		\item For $-1\leq \beta\leq 1$, the system has a single hyperbolic fixed point $x_*=1\slash 2$ which is a global attractor and there are no $p$-periodic points for $p\geq 2$. 
		\item For $\vert \beta \vert >1$, the system has one repelling hyperbolic fixed point $x_*=1 \slash 2$ and two hyperbolic stable fixed points $c_0$, $c_1$, with $0<c_0< 1\slash 2< c_1<1$, and stable sets $W^s(c_0)=[0,1\slash 2)$, $W^s(c_1)=(1\slash 2,1]$. 
		\item For $\vert \beta \vert = 1$, the system has one non-hyperbolic fixed point at $x_*=1\slash 2$ which is asymptotically stable and attracting. 
	\end{enumerate}
The system undergoes a pitchfork bifurcation at $\beta=\pm 1$. There are no $p$-periodic points for $p\geq 2$. 
\end{theorem}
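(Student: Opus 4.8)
\emph{Proof strategy.} The plan is to reduce everything to Theorem~\ref{thm:sigmoid.dynamics}. Write $f_\beta(x) := \sigma(2x-1,2\beta)$ for the first--iterate map and $g_\beta := f_\beta\circ f_\beta$ for the second--iterate map (\ref{fn:sigmoid.2}). Two structural facts drive the argument. First, the fixed points of $g_\beta$ are precisely the fixed points of $f_\beta$ together with the points lying on $2$-cycles of $f_\beta$, and more generally a point of minimal period $p\ge2$ for $g_\beta$ is a point of minimal period $p$ (when $p$ is odd) or $2p$ (when $p$ is even) for $f_\beta$; by the chain rule $g_\beta'(x)=f_\beta'(f_\beta(x))\,f_\beta'(x)$, so $g_\beta'(p)=f_\beta'(p)^2$ at a fixed point $p$ of $f_\beta$ and $g_\beta'(c)=f_\beta'(c_0)f_\beta'(c_1)$ at a point $c$ of a $2$-cycle $\{c_0,c_1\}$. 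Second, $g_\beta$ is even in $\beta$: from $\sigma(2x-1,2\beta)+\sigma(2x-1,-2\beta)=1$ one gets $f_{-\beta}=1-f_\beta$, and combining this with the reflection symmetry $f_\beta(1-x)=1-f_\beta(x)$ gives $g_{-\beta}(x)=f_{-\beta}(f_{-\beta}(x))=1-f_\beta\bigl(1-f_\beta(x)\bigr)=f_\beta(f_\beta(x))=g_\beta(x)$. Hence it suffices to analyze $\beta\ge0$, and the three regimes $|\beta|<1$, $|\beta|=1$, $|\beta|>1$ then follow.

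With these in hand, items 1 and 2 would be read off directly from Theorem~\ref{thm:sigmoid.dynamics}. For $0\le\beta\le1$, $f_\beta$ has the unique, globally attracting fixed point $x_*=1/2$ and no $2$-cycles, so $g_\beta$ has the single fixed point $1/2$ with multiplier $\beta^2<1$ when $\beta<1$, and it is a global attractor because the $g_\beta$-orbit of any point is a subsequence of its $f_\beta$-orbit. For $\beta>1$, $f_\beta$ has fixed points $c_0<1/2<c_1$ and no $2$-cycles, so $g_\beta$ has exactly those three fixed points, with $g_\beta'(1/2)=\beta^2>1$ (repelling) and $g_\beta'(c_i)=f_\beta'(c_i)^2<1$ (attracting); since for $\beta>1$ the map $f_\beta$ is increasing and leaves each of $[0,1/2)$ and $(1/2,1]$ invariant, the stable sets $W^s(c_0)=[0,1/2)$ and $W^s(c_1)=(1/2,1]$ carry over verbatim from those of $f_\beta$. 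By evenness the same conclusions hold for $\beta<-1$; equivalently, one checks directly that for $\beta<-1$ the orientation--reversing map $f_\beta$ interchanges $[0,1/2)$ and $(1/2,1]$, so its fixed point $1/2$ and its stable $2$-cycle $\{c_0,c_1\}$ become the three fixed points of $g_\beta$ with the stated multipliers and basins.

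For the non-hyperbolic case $|\beta|=1$ (item 3), Table~\ref{tab:derivative} supplies $g_\beta'(1/2)=1$, $g_\beta''(1/2)=0$, $g_\beta'''(1/2)=-16<0$ for $\beta=\pm1$, so Theorem~\ref{thm:nonhyperbolic} (the unit--multiplier criterion) makes $1/2$ asymptotically stable, and it is attracting on $[0,1]$ because the same is true of $f_{\pm1}$ by Theorem~\ref{thm:sigmoid.dynamics}(4). The absence of $p$-periodic points with $p\ge2$ is immediate from the first structural fact: for $|\beta|\le1$ the map $f_\beta$ has only the fixed point $1/2$; for $\beta>1$ it has no periodic points of period $\ge2$; and for $\beta<-1$ its only $2$-cycle produces just fixed points of $g_\beta$ while $f_\beta$ has no periodic points of period $>2$. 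Finally, the bifurcations at $\beta=\pm1$ would be obtained by checking the hypotheses of the pitchfork theorem~\ref{thm:pitchfork} for $g_\beta$ at $(x_*,\beta_*)=(1/2,\pm1)$, namely $g_\beta(1/2)=1/2$, $g_{\beta,x}(1/2)=1$, $g_{\beta,xx}(1/2)=0$, $g_{\beta,\beta}(1/2)=0$, $g_{\beta,x\beta}(1/2)\neq0$, $g_{\beta,xxx}(1/2)=-16\neq0$, all of which are entries of Table~\ref{tab:derivative}; the pitchfork at $\beta=-1$ is, alternatively, forced by the evenness $g_{-\beta}=g_\beta$ from the one at $\beta=1$, and is exactly the period--doubling bifurcation of $f_\beta$ at $\beta=-1$ (Theorem~\ref{thm:pdb}) seen through the second--iterate map.

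I do not expect a serious obstacle here once Theorem~\ref{thm:sigmoid.dynamics} is in hand; the one place that genuinely requires care is transferring the basins of attraction from $f_\beta$ to $g_\beta=f_\beta\circ f_\beta$ in the anti--ferromagnetic range $\beta<-1$, where $f_\beta$ is decreasing and swaps the two half--intervals $[0,1/2)$ and $(1/2,1]$ --- a step that the evenness identity $g_{-\beta}=g_\beta$ lets one bypass entirely by importing the already--established $\beta>1$ picture.
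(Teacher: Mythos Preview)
Your proposal is correct and close in spirit to the paper's (omitted) argument, but the routes differ in two places worth flagging. The paper indicates a proof ``similar to the proof of Theorem~\ref{thm:sigmoid.dynamics}'', and in the proof of Theorem~\ref{thm:sequential} it effectively carries this out by redoing the linear stability analysis directly on the second-iterate map and ruling out $p$-periodic points ($p\ge 2$) via the observation that (\ref{fn:sigmoid.2}) is strictly monotone increasing on $[0,1]$. You instead deduce everything from Theorem~\ref{thm:sigmoid.dynamics} through two structural reductions: the correspondence between fixed/periodic points of $g_\beta$ and those of $f_\beta$, and the evenness identity $g_{-\beta}=g_\beta$ (which the paper states in passing but does not exploit). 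Your evenness reduction is elegant and genuinely halves the casework; on the other hand, the paper's monotonicity argument for the absence of periodic orbits is cleaner than your period-tracking, and your parenthetical ``minimal period $p$ (when $p$ is odd) or $2p$ (when $p$ is even) for $f_\beta$'' is slightly off --- for odd $p$ both $f_\beta$-periods $p$ and $2p$ are a priori possible --- though this does not affect your conclusion since any $g_\beta$-period $p\ge 2$ forces an $f_\beta$-period $\ge 3$, which Theorem~\ref{thm:sigmoid.dynamics} excludes.
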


\subsection{Sequential Dynamics}
To fully understand the dynamics of the equations defining the updates to $q^*_1$ and $q^*_2$ it suffices to track the evolution of the points $q^*_1(1)=\zeta$ and $q^*_2(1)=\xi$. The CAVI algorithm updates terms sequentially, using the new values of the variables to calculate the others. We initialize the CAVI algorithm at points $\zeta_0,\xi_0$. The CAVI algorithm is a dynamical system formed by sequential iterations of $\sigma(2x-1,2\beta)$ starting from $\zeta_0, \xi_0$. We can decouple the CAVI updates for $\xi_k$ and $\zeta_k$ by looking at the second iterations. This decoupling is visualized in the diagram 
\eqref{diagram:decoupling2} below.  The system formed the sequential updates is equivalent to the following decoupled system 
\begin{eqnarray*} 
\zeta_1 &&= \sigma(2\xi_0-1,2\beta)\\
\zeta_{k+1}&&=\sigma(2\sigma(2\zeta_k-1,2\beta)-1,2\beta),\quad k\geq 1 \\
\xi_{k+1}&&=\sigma(2\sigma(2\xi_k-1,2\beta)-1,2\beta), \quad k\geq 0. 
\end{eqnarray*}	
We propose to investigate dynamics of sequential systems by studying the dynamics of (\ref{fn:sigmoid.2}) in the Appendix \ref{sec:dynamical} acting on each variable individually. 
\begin{eqnarray}
\begin{tikzcd}
   \zeta_0 & \zeta_1 \arrow[d, "\sigma"] \arrow[r,"\sigma^2"]&  \zeta_2  \arrow[r,"\sigma^2"] \arrow[d, "\sigma"]& \zeta_3\cdots\\
  \xi_0 \arrow[r, "\sigma^2"] \arrow[ur, "\sigma"] & \xi_1 \arrow[r, "\sigma^2"] \arrow[ru, "\sigma"] & \xi_2 \arrow[r, "\sigma^2"] \arrow[ur, "\sigma"] & \xi_3\cdots
  \end{tikzcd}
  \label{diagram:decoupling2}
\end{eqnarray}
Illustrations of the evolution of the dynamics of the parallel updates for various initializations and values of $\beta$ are in Figures \ref{fig:1}-\ref{fig:4}. 
\begin{figure}[htbp!]
    \centering
    \includegraphics[width=10cm]{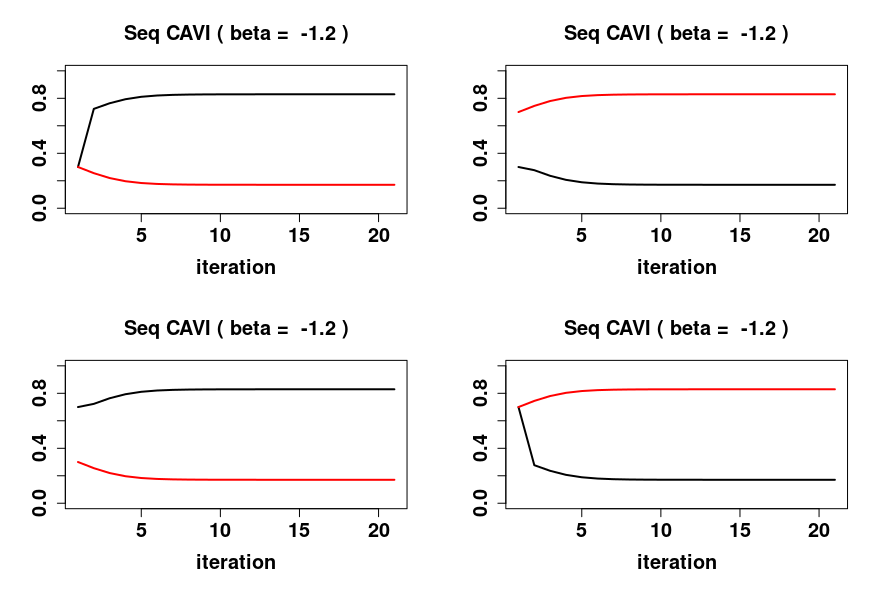}
    \caption{A plot of the first 20 iterations of the CAVI algorithm at various initializations for $\beta=-1.2$. In each of the plots the $\zeta$ updates are black and the $\xi$ updates are red. The upper left plot is an initialization of $\zeta_0=0.3$ and $\xi_0=0.3$, we see that $\zeta_k$ converge to the local fixed point $c_1(1.2)=0.82928$ and $\xi_k$ converge to the local fixed point $c_0(1.2)=0.17071$. The upper right is an initialization of $\zeta_0=0.3$ and $\xi_0=0.7$, we see that $\zeta_k$ converge to the local fixed point $c_0(1.2)=0.17071$ and $\xi_k$ converge to the local fixed point $c_1(1.2)=0.82928$. The lower left is is an initialization of $\zeta_0=0.7$ and $\xi_0=0.3$, we see that $\zeta_k$ converge to the local fixed point $c_1(1.2)=0.82928$ and $\xi_k$ converge to the local fixed point $c_0(1.2)=0.17071$. The upper left plot is an initialization of $\zeta_0=0.7$ and $\xi_0=0.7$, we see that $\zeta_k$ converge to the local fixed point $c_0(1.2)=0.17071$ and $\xi_k$ converge to the local fixed point $c_1(1.2)=0.82928$.}
    \label{fig:1}
\end{figure}

\begin{figure}[htbp!]
    \centering
    \includegraphics[width=10cm]{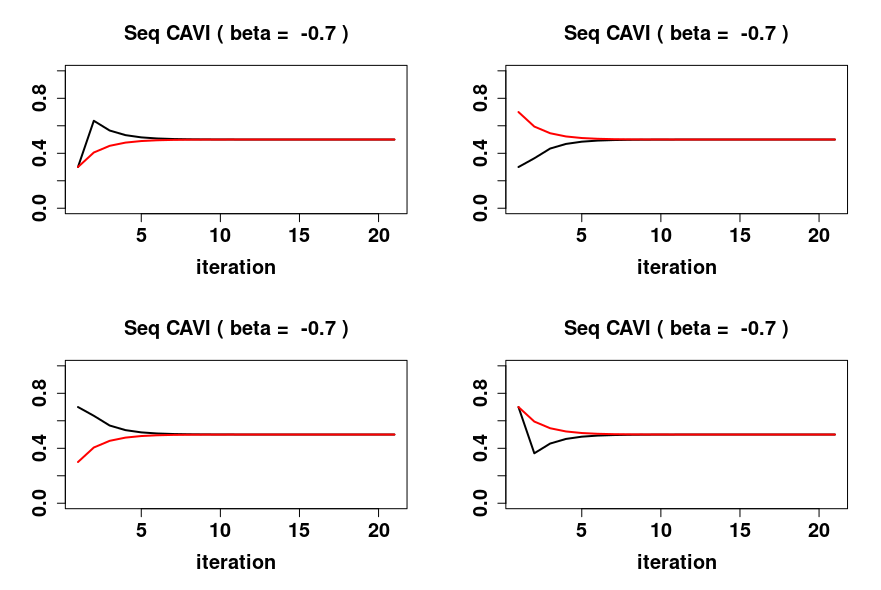}
    \caption{A plot of the first 20 iterations of the CAVI algorithm at various initializations for $\beta=-0.7$. In each of the plots the $\zeta$ updates are black and the $\xi$ updates are red. The upper left plot is an initialization of $\zeta_0=0.3$ and $\xi_0=0.3$, we see that both of these converge to the global fixed point $1\slash 2$. The upper right is an initialization of $\zeta_0=0.3$ and $\xi_0=0.7$, we see that this initialization converges to the global fixed point $1\slash 2$. The lower left is is an initialization of $\zeta_0=0.7$ and $\xi_0=0.3$, we see that this initialization converges to the global fixed point $1\slash 2$. The upper left plot is an initialization of $\zeta_0=0.7$ and $\xi_0=0.7$, we see that both of these converge to the global fixed point $1\slash 2$. }
    \label{fig:2}
\end{figure}

\begin{figure}[htbp!]
    \centering
    \includegraphics[width=10cm]{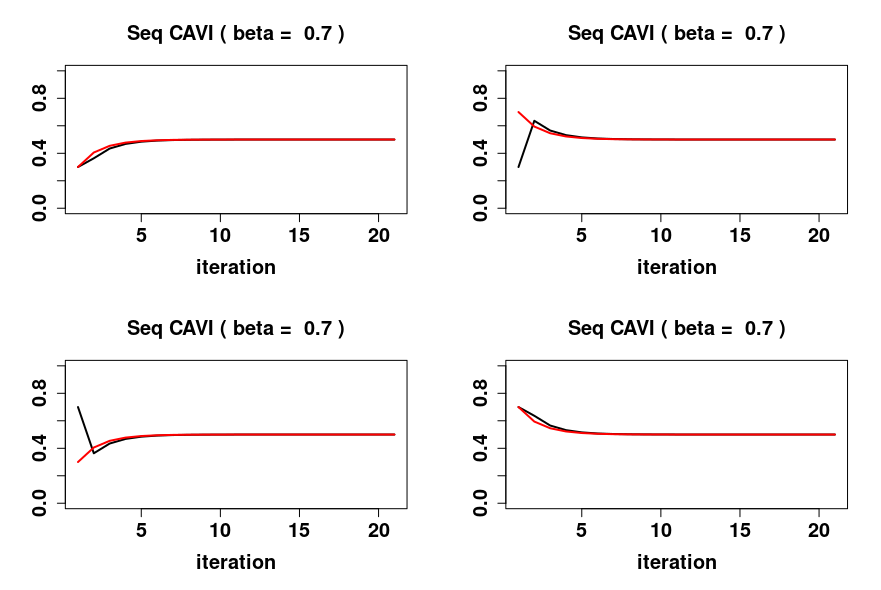}
    \caption{A plot of the first 20 iterations of the CAVI algorithm at various initializations for $\beta=0.7$. In each of the plots the $\zeta$ updates are black and the $\xi$ updates are red. The upper left plot is an initialization of $\zeta_0=0.3$ and $\xi_0=0.3$, we see that both of these converge to the global fixed point $1\slash 2$. The upper right is an initialization of $\zeta_0=0.3$ and $\xi_0=0.7$, we see that this initialization converges to the global fixed point $1\slash 2$. The lower left is is an initialization of $\zeta_0=0.7$ and $\xi_0=0.3$, we see that this initialization converges to the global fixed point $1\slash 2$. The upper left plot is an initialization of $\zeta_0=0.7$ and $\xi_0=0.7$, we see that both of these converge to the global fixed point $1\slash 2$.}
    \label{fig:3}
\end{figure}

\begin{figure}[htbp!]
    \centering
    \includegraphics[width=10cm]{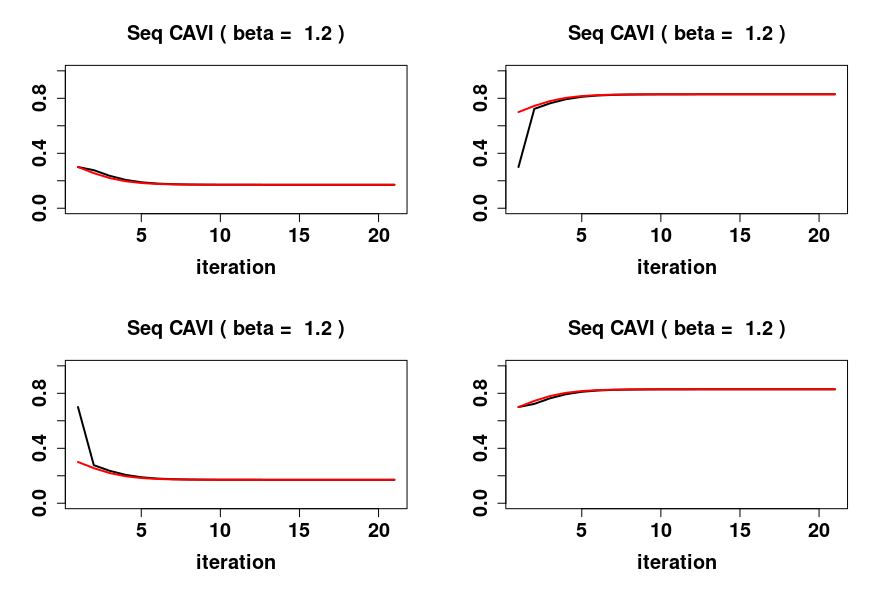}
    \caption{A plot of the first 20 iterations of the CAVI algorithm at various initializations for $\beta=1.2$. In each of the plots the $\zeta$ updates are black and the $\xi$ updates are red. The upper left plot is an initialization of $\zeta_0=0.3$ and $\xi_0=0.3$, we see that both of these converge to the local fixed point $c_0(1.2)=0.17071$. The upper right is an initialization of $\zeta_0=0.3$ and $\xi_0=0.7$, we see that this initialization converges to the local fixed point $c_1(1.2)=0.82928$. The lower left is is an initialization of $\zeta_0=0.7$ and $\xi_0=0.3$, we see that this initialization converges to the local fixed point $c_0(1.2)=0.17071$. The upper left plot is an initialization of $\zeta_0=0.7$ and $\xi_0=0.7$, we see that both of these converge to the local fixed point $c_1(1.2)=0.82928$}
    \label{fig:4}
\end{figure}

 \begin{theorem}[CAVI dynamics]\label{thm:sequential}
  	The dynamics of the CAVI system (\ref{sys:seq.cavi}) are given by
  	\begin{enumerate}
  	    \item For $ \beta < -1$ the system has two stable fixed points $(c_0,c_1)$ and $(c_1, c_0)$, with stable sets $W^s((c_0,c_1))= [0,1] \times [0,1\slash 2)$ and $W^s((c_1,c_0))= [0,1] \times (1\slash 2,1]$ respectively.
  	    \item For $\vert \beta \vert \leq 1$, the system has a global attracting fixed point $(1\slash2, 1\slash 2)$. 
  		\item For $\beta > 1$ the system has two stable fixed points $(c_0,c_0)$ and $(c_1, c_1)$, with domains of attraction $W^s((c_0,c_0))=[0,1]\times [0,1\slash 2)$ and $W^s((c_1,c_1))=[0,1]\times (1\slash 2,1]$ respectively.
  		\item For $\vert \beta \vert> 1$, the system has one unstable fixed point $(1\slash2, 1\slash 2)$.
  		\item For $\vert \beta \vert> 1$, the system has no $p$-periodic points for $p\geq 2$.
  	\end{enumerate}
    where $0\leq c_0<1\slash 2<c_1\leq 1$ are the fixed points of (\ref{fn:sigmoid.1}) in $[0,1]$. The system under goes a super-critical pitchfork bifurcation at $\beta= -1$ and again at $\beta= 1$ 
\end{theorem}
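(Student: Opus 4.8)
The plan is to exploit the decoupling recorded just below the diagram~\eqref{diagram:decoupling2}: since $\zeta_{k+1}=\sigma(2\xi_k-1,2\beta)$ involves only $\xi_k$, while $\xi_{k+1}=\sigma(2\zeta_{k+1}-1,2\beta)$ equals the second iterate $\sigma^2(\xi_k):=\sigma(2\sigma(2\xi_k-1,2\beta)-1,2\beta)$, the update map $F(\zeta,\xi)=\bigl(\sigma(2\xi-1,2\beta),\,\sigma^2(\xi)\bigr)$ does not depend on $\zeta$ at all. Hence $\xi_k=(\sigma^2)^k(\xi_0)$ and, for $k\ge 1$, $\zeta_k=(\sigma^2)^{k-1}(\zeta_1)$ with $\zeta_1=\sigma(2\xi_0-1,2\beta)$, so the whole dynamics is governed by the scalar map $\sigma^2$ of Theorem~\ref{thm:sigmoid2.dynamics} together with the single ``hand-off'' step $\xi_0\mapsto\zeta_1$. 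Throughout I will use the elementary facts that $x\mapsto\sigma(2x-1,2\beta)$ fixes $1/2$ and is strictly increasing for $\beta>0$ and strictly decreasing for $\beta<0$, so $\zeta_1$ lies on the same side of $1/2$ as $\xi_0$ when $\beta>0$ and on the opposite side when $\beta<0$, with $\zeta_1=1/2$ exactly when $\xi_0=1/2$.

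The fixed points emerge from composing the two defining equations $\zeta=\sigma(2\xi-1,2\beta)$, $\xi=\sigma(2\zeta-1,2\beta)$: both $\zeta$ and $\xi$ must be fixed points of $\sigma^2$, so for $|\beta|\le1$ the only candidate is $(1/2,1/2)$, and for $|\beta|>1$ the candidates are drawn from $\{c_0,1/2,c_1\}^2$. Compatibility is settled by the two regimes of the single sigmoid (Theorem~\ref{thm:sigmoid.dynamics}): for $\beta>1$, $c_0$ and $c_1$ are themselves fixed points of $\sigma(2x-1,2\beta)$, leaving $(c_0,c_0),(1/2,1/2),(c_1,c_1)$; for $\beta<-1$, $\{c_0,c_1\}$ is a $2$-cycle, i.e. $\sigma(2c_0-1,2\beta)=c_1$ and $\sigma(2c_1-1,2\beta)=c_0$, leaving $(c_0,c_1),(c_1,c_0),(1/2,1/2)$. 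Feeding the hand-off step into the basin structure $W^s(c_0)=[0,1/2)$, $W^s(c_1)=(1/2,1]$ for $\sigma^2$ then pins down the stable sets: for $\xi_0\ne1/2$, $\xi_k$ converges to $c_0$ or $c_1$ according to the side of $1/2$ on which $\xi_0$ sits, while $\zeta_k$ converges to $c_0$ or $c_1$ according to the side of $\zeta_1$, which is the same side ($\beta>1$) or the opposite side ($\beta<-1$); since $\zeta_0$ plays no role, this yields the claimed products $[0,1]\times[0,1/2)$ and $[0,1]\times(1/2,1]$ (with the two symmetric fixed points assigned accordingly). The orbit $\{\xi_0=1/2\}$ is the only one reaching $(1/2,1/2)$ when $|\beta|>1$, so it is then repelling; for $|\beta|\le1$, $\sigma^2$ pulls all of $[0,1]$ to $1/2$, so $(1/2,1/2)$ is globally attracting. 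Non-existence of $p$-cycles with $p\ge2$ when $|\beta|>1$ is immediate: the $\xi$-coordinate of any periodic orbit is a periodic orbit of $\sigma^2$, which has none of period $\ge2$ (Theorem~\ref{thm:sigmoid2.dynamics}), so $\xi_k$ is constant, hence $\zeta_k$ is constant for $k\ge1$, and periodicity forces $\zeta_0=\zeta_1$, i.e. the orbit is a fixed point.

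For the bifurcations at $\beta=\pm1$ I would linearize. Because $F$ is independent of $\zeta$, $DF\big|_{(1/2,1/2)}=\left(\begin{smallmatrix}0&\beta\\0&\beta^2\end{smallmatrix}\right)$, with eigenvalues $0$ and $\beta^2$; as $|\beta|$ increases through $1$ the eigenvalue $\beta^2$ crosses $+1$ (it is never $-1$), which rules out a period-doubling of $F$. Moreover $F$ is equivariant under the involution $(\zeta,\xi)\mapsto(1-\zeta,1-\xi)$, since $\sigma(2(1-x)-1,2\beta)=1-\sigma(2x-1,2\beta)$, and $(1/2,1/2)$ is its unique symmetric fixed point; together with the simple eigenvalue $+1$ this identifies the bifurcation as a pitchfork (note this is a pitchfork of the two-dimensional map $F$ even at $\beta=-1$, where the scalar $\sigma(2x-1,2\beta)$ period-doubles: the would-be $2$-cycle $\{c_0,c_1\}$ lifts to the pair of honest fixed points $(c_0,c_1),(c_1,c_0)$ of $F$). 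A one-dimensional center-manifold reduction---the center manifold being a graph over the $\xi$-axis near $(1/2,1/2)$, on which the slow variable $\xi$ evolves by $\sigma^2$---reduces the normal form to that of $\sigma^2$ at $x=1/2$, whose pitchfork and non-degeneracy are already recorded in Theorem~\ref{thm:sigmoid2.dynamics}; super-criticality follows from the negative cubic coefficient $(\sigma^2)_{xxx}(1/2)=-16$ in the derivative table, or equivalently from the two \emph{stable} fixed points born for $|\beta|>1$ while $(1/2,1/2)$ loses stability. Assembling the fixed-point list, the basin computation, the no-cycle statement, and the two pitchforks yields all five items and the final sentence.

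I expect the center-manifold step to be the only genuinely delicate point: $DF$ has an honest zero eigenvalue, so one cannot invoke a scalar pitchfork theorem verbatim and must justify the reduction. This is transparent here---the zero eigendirection is precisely the ``$\zeta$ gets overwritten'' direction, it is strictly contracting, and the surviving simple eigendirection carries exactly the $\sigma^2$ dynamics---but it should be stated with care. A reduction-free alternative is to declare the bifurcation diagram directly from the global fixed-point and basin classification obtained above, which already exhibits, at both $\beta=1$ and $\beta=-1$, the signature of a super-critical pitchfork: a unique stable fixed point for $|\beta|<1$ splitting into two stable fixed points together with an unstable one for $|\beta|>1$, symmetric under $(\zeta,\xi)\mapsto(1-\zeta,1-\xi)$.
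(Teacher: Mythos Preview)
Your proof is correct and follows essentially the same strategy as the paper: reduce the sequential system via the decoupling \eqref{diagram:decoupling2} to the one-dimensional map $\sigma^2$ and then read off the two-dimensional behavior from Theorems~\ref{thm:sigmoid.dynamics} and~\ref{thm:sigmoid2.dynamics}.

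The differences are ones of explicitness rather than substance. The paper's proof works almost entirely at the level of the scalar map $\sigma^2$: it performs the linear stability analysis, cites monotonicity of $\sigma^2$ to rule out periodic points, and then verifies the derivative conditions of Theorem~\ref{thm:pitchfork} for $\sigma^2$ at $(x_*,\beta_*)=(1/2,\pm 1)$ directly, without commenting on how the scalar pitchfork lifts to the two-dimensional map. You, by contrast, assemble the two-dimensional picture explicitly: you use the ``hand-off'' step $\xi_0\mapsto\zeta_1$ together with the sign of $\beta$ to pair the scalar attractors into the correct two-dimensional fixed points and basins, and you compute $DF|_{(1/2,1/2)}$ to exhibit the eigenvalues $0$ and $\beta^2$, making clear why the relevant crossing is through $+1$ at both $\beta=\pm 1$ and why a center-manifold reduction to the $\xi$-axis recovers exactly the $\sigma^2$ normal form. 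Your treatment of the no-cycle claim (a periodic orbit of $F$ projects to a periodic orbit of $\sigma^2$, hence is constant) is also more detailed than the paper's one-line monotonicity remark. These additions make your argument somewhat more self-contained, while the paper's version is terser but relies on the reader to interpret the scalar conclusions two-dimensionally; neither approach introduces an idea absent from the other.
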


\begin{proof} 
We will analyze the CAVI system in the same way that we analyzed the sigmoid function. We begin by preforming a linear stability analysis of the fixed points in the system and analysing periodic points. We then show that the system satisfies the sufficient conditions of the pitchfork bifurcation.

For $\vert \beta \vert \leq 1$, the function (\ref{fn:sigmoid.2}) has one fixed point $x_*=1\slash 2$. For $\vert \beta\vert >1$ the function (\ref{fn:sigmoid.2}) has three fixed points $c_0$, $1\slash 2$, $c_1(\beta)$ where $0\leq c_0<1\slash 2<c_1\leq 1$. Calculating the Jacobian of the system at each of the fixed points shows that for $\vert \beta \vert<1$, the single fixed point $1\slash 2$ is a global attracting fixed point with $W^s(1\slash 2)=[0,1]$. For $\vert \beta \vert>1$, $x_*=1\slash 2$ is a repelling fixed point with $W^s(1\slash 2)=\{1\slash2\}$, $W^u(1\slash 2)=(c_0, 1\slash 2)\cup(1\slash2, c_1)$. For $\vert \beta \vert>1$, the system has two attracting fixed points $c_0,c_1$ with basin of attraction $W^s(c_0)=[0,1\slash2)$, $W^s(c_1)=(1\slash2,1]$. The system has no $p$-periodic points for $p\geq 2$ since the function (\ref{fn:sigmoid.2}) is strictly monotone increasing on $[0,1]$. For $\beta=\pm1$, we apply theorem \ref{thm:nonhyperbolic}, to see that $x_*=1\slash 2$ is a asymptotically stable. \\
\\
The system undergoes a pitchfork bifurcation at $(x_*,\beta_*)=(1\slash 2,-1)$ and again at $(x_*,\beta_*)=(1\slash 2,1)$. For $\vert \beta\vert=1$ we have the following conditions on the map (\ref{fn:sigmoid.2}) and its derivatives at the fixed point $x_*=1\slash2$ 
\begin{eqnarray*}
\sigma(2\sigma(2x_*-1,\pm2\beta_*)-1,\pm2\beta_*)=1\slash 2 \\
\frac{\partial}{\partial x}\left[\sigma(2\sigma(2x-1,2\beta)-1,2\beta)\right]\mid_{x=x_*,\,\beta=\pm\beta_*}=1 \\
\frac{\partial}{\partial x^2}\left[\sigma(2x-1,2\beta)\right]\mid_{x=x_*,\,\beta=\pm\beta_*}=0\\
\frac{\partial}{\partial \beta}\left[\sigma(2\sigma(2x-1,2\beta)-1,2\beta)\right]\mid_{x=x_*,\,\beta=\pm\beta_*}=0 \\
\frac{\partial}{\partial \beta \partial x}\left[\sigma(2\sigma(2x-1,2\beta)-1, 2\beta)\right]\mid_{x=x_*,\,\beta=\pm\beta_*}\neq0\\
\frac{\partial}{\partial x^3}\left[\sigma(2\sigma(2x-1,2\beta)-1,2\beta)\right]\mid_{x=x_*,\,\beta=\pm\beta_*}\neq0. 
\end{eqnarray*}
Both bifurcations are super-critical. 
\end{proof}

\subsection{Parallel Updates}
The system of parallel updates is defined by the one-step map $F:\mathbb{R}^2\to \mathbb{R}^2$
\begin{eqnarray}
\begin{pmatrix} \zeta \\
\xi \end{pmatrix}\mapsto F(\zeta,\xi)=\begin{pmatrix}\sigma(2\xi-1,2\beta)\\ \sigma(2\zeta-1,2\beta).\end{pmatrix} \label{eqn:par.sys}
\end{eqnarray}
The dynamics of the parallel system are similar to the system studied in \cite{Blum.Wang.92}. As we shall show below, the parallel system exhibits periodic behavior that the sequential system does not and it follows as a corollary that the systems are not locally topologically conjugate. 

The parallelized CAVI algorithm is a dynamical system formed by iterations of $F$ defined in (\ref{eqn:par.sys}). We shall decouple the parallelized CAVI updates for sequences $\xi_k$ and $\zeta_k$ by looking at iterations of (\ref{fn:sigmoid.2}) acting on the sequences individually. This decoupling is visualized in diagram form  
\begin{eqnarray}
\begin{tikzcd}\label{diagram:decoupling}
   \zeta_0  \arrow[dr, ""]& \zeta_1  \arrow[dr,""]&  \zeta_2  \arrow[dr,""] & \zeta_3\cdots\\
  \xi_0 \arrow[ur, ""] & \xi_1  \arrow[ru, ""] & \xi_2 \arrow[ur, ""] & \xi_3\cdots
  \end{tikzcd}
\end{eqnarray}
where each cross is an application of $F$. The system formed the parallel updates is equivalent to the following decoupled systems of even subsequences and odd subsequences. The even subsequences are 
\begin{eqnarray} 
\zeta_{2k} &&= \sigma(2 \sigma(2\zeta_{2(k-1)}-1,2\beta)-1,2\beta),\quad k\geq 1 \label{seq:zeta.even}\\
\xi_{2k}&&=\sigma(2\sigma(2\xi_{2(k-1)}-1,2\beta)-1,2\beta), \quad k\geq 1. \label{seq:xi.even}
\end{eqnarray}
The odd subsequences are 
\begin{eqnarray} 
\zeta_{2k+1}&&=  \begin{cases} 
      \sigma(2\xi_{0},2\beta) & k=0\\
      \sigma(2\sigma(2\zeta_{2k-1},2\beta),2\beta) &k\geq 1
   \end{cases}
\label{seq:zeta.odd} \\
\xi_{2k+1}&&= \begin{cases} 
      \sigma(2\zeta_{0},2\beta) & k=0\\
      \sigma(2\sigma(2\xi_{2k-1},2\beta),2\beta) &k\geq 1.
   \end{cases} \label{seq:xi.odd}
\end{eqnarray}	
We investigate the dynamics of parallel systems by studying the dynamics of \eqref{fn:sigmoid.2} in the Appendix \ref{sec:dynamical} acting on each of the subsequences  individually.  Illustrations of the evolution of the dynamics of the parallel updates for various initializations and values of $\beta$ are in Figures \ref{fig:Par_CAVI_betaN12}-\ref{fig:Par_CAVI_beta12_2}. 
\begin{figure}[htbp!]
    \centering
    \includegraphics[width=10cm]{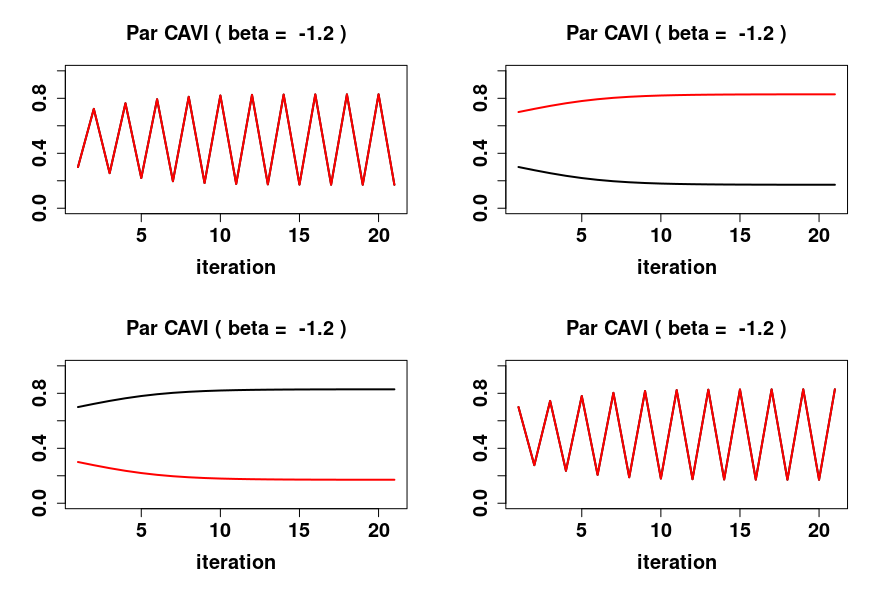}
    \caption{A plot of the first 20 iterations of the parallel update CAVI algorithm at various initializations for $\beta=-1.2$. In each of the plots the $\zeta$ updates are black and the $\xi$ updates are red. The upper left is an initialization of $\zeta_0=0.3$ and $\xi_0=0.7$, we see that this initialization converges to the two cycle $\mathcal{C}_0=\{(c_0,c_0),(c_1,c_1)\}$. The upper right plot is an initialization of $\zeta_0=0.3$ and $\xi_0=0.7$, we see that both of these converge to $c_0(1.2)\approx 0.17071$. The lower left plot is an initialization of $\zeta_0=0.7$ and $\xi_0=0.7$, we see that both of these converge to $c_1(1.2)\approx 0.82928$. The lower right is is an initialization of $\zeta_0=0.7$ and $\xi_0=0.3$, we see that this initialization converges to the two cycle $\mathcal{C}_0=\{(c_0,c_0),(c_1,c_1)\}$.}
    \label{fig:Par_CAVI_betaN12}
\end{figure}

\begin{figure}[htbp!]
    \centering
    \includegraphics[width=10cm]{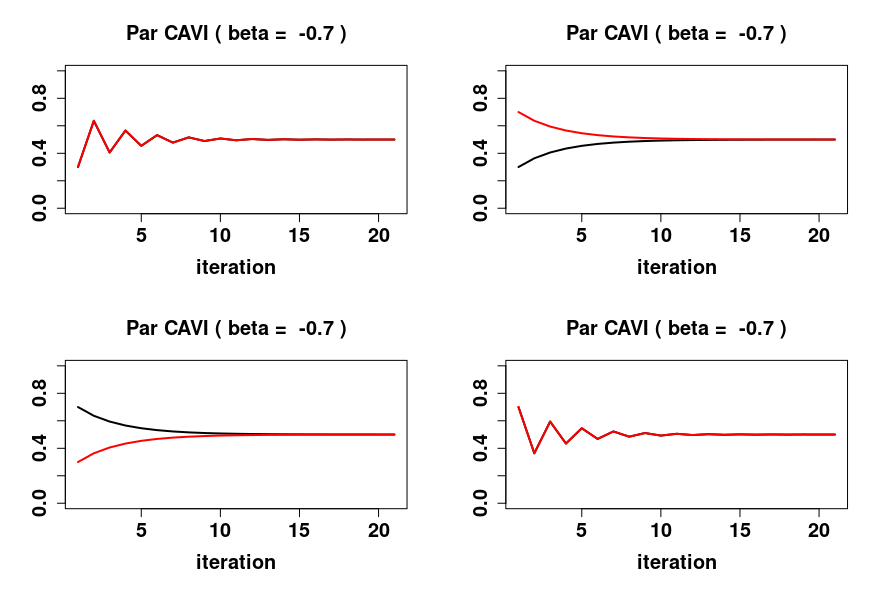}
    \caption{A plot of the first 20 iterations of the parallel update CAVI algorithm at various initializations for $\beta=-0.7$. In each of the plots the $\zeta$ updates are black and the $\xi$ updates are red. The upper left plot is an initialization of $\zeta_0=0.3$ and $\xi_0=0.3$, we see that both of these converge to the global fixed point $1\slash 2$. The upper right is an initialization of $\zeta_0=0.3$ and $\xi_0=0.7$, we see that this initialization converges to the global fixed point $1\slash 2$. The lower left is is an initialization of $\zeta_0=0.7$ and $\xi_0=0.3$, we see that this initialization converges to the global fixed point $1\slash 2$. The upper left plot is an initialization of $\zeta_0=0.7$ and $\xi_0=0.7$, we see that both of these converge to the global fixed point $1\slash 2$.}
    \label{fig:Par_CAVI_betaN07}
\end{figure}

\begin{figure}[htbp!]
    \centering
    \includegraphics[width=10cm]{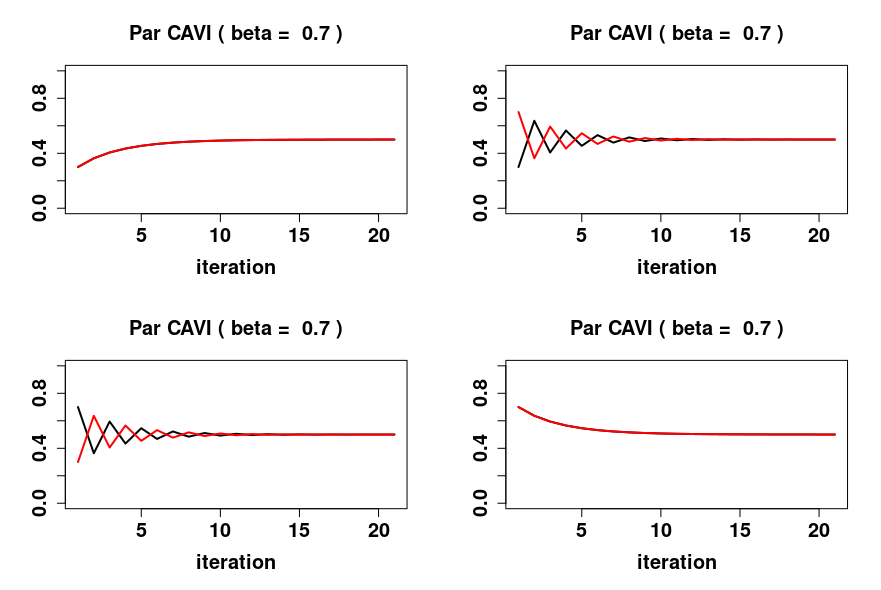}
    \caption{A plot of the first 20 iterations of the parallel update CAVI algorithm at various initializations for $\beta=0.7$. In each of the plots the $\zeta$ updates are black and the $\xi$ updates are red. The upper left plot is an initialization of $\zeta_0=0.3$ and $\xi_0=0.3$, we see that both of these converge to the global fixed point $1\slash 2$. The upper right is an initialization of $\zeta_0=0.3$ and $\xi_0=0.7$, we see that this initialization converges to the global fixed point $1\slash 2$. The lower left is is an initialization of $\zeta_0=0.7$ and $\xi_0=0.3$, we see that this initialization converges to the global fixed point $1\slash 2$. The upper left plot is an initialization of $\zeta_0=0.7$ and $\xi_0=0.7$, we see that both of these converge to the global fixed point $1\slash 2$. }
    \label{fig:Par_CAVI_beta07}
\end{figure}

\begin{figure}[htbp!]
    \centering
    \includegraphics[width=10cm]{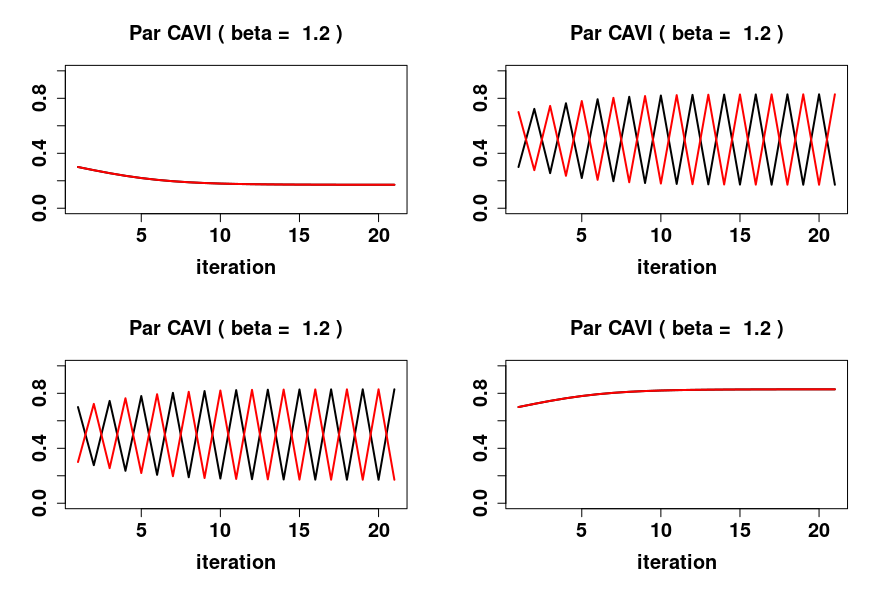}
    \caption{A plot of the first 20 iterations of the parallel update CAVI algorithm at various initializations for $\beta=1.2$. In each of the plots the $\zeta$ updates are black and the $\xi$ updates are red. The upper left plot is an initialization of $\zeta_0=0.3$ and $\xi_0=0.3$, we see that both of these converge to $c_0(1.2)\approx 0.17071$. The upper right is an initialization of $\zeta_0=0.3$ and $\xi_0=0.7$, we see that this initialization converges to the two cycle $\mathcal{C}_1=\{(c_1,c_0),(c_0,c_1)\}$. The lower left is is an initialization of $\zeta_0=0.7$ and $\xi_0=0.3$, we see that this initialization converges to the two cycle $\mathcal{C}_1=\{(c_1,c_0),(c_0,c_1)\}$. The lower right plot is an initialization of $\zeta_0=0.7$ and $\xi_0=0.7$, we see that both of these converge to $c_1(1.2)\approx 0.82928$.}
    \label{fig:Par_CAVI_beta12}
\end{figure}

\begin{figure}[htbp!]
    \centering
    \includegraphics[width=10cm]{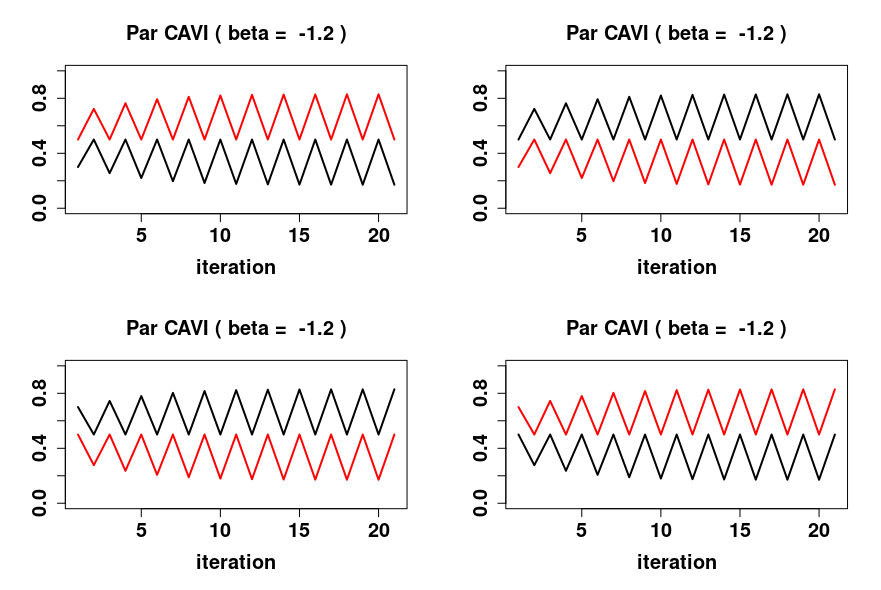}
    \caption{A plot of the first 20 iterations of the parallel update CAVI algorithm at various initializations for $\beta=-1.2$. In each of the plots the $\zeta$ updates are black and the $\xi$ updates are red. The upper left plot is an initialization of $\zeta_0=0.3$ and $\xi_0=0.5$, we see that this converges to the two-cycle $\mathcal{C}_2=\{(c_0,1\slash 2), (1\slash2, c_1)\}$. The upper right is an initialization of $\zeta_0=0.5$ and $\xi_0=0.3$, we see that this initialization converges to the two cycle $\mathcal{C}_3=\{(c_1,1\slash 2), (1\slash2, c_0)\}$. The lower left is is an initialization of $\zeta_0=0.7$ and $\xi_0=0.5$, we see that this initialization converges to the two cycle $\mathcal{C}_3$. The lower right plot is an initialization of $\zeta_0=0.5$ and $\xi_0=0.7$, we see that this converges to the two-cycle $\mathcal{C}_2$}
    \label{fig:Par_CAVI_beta12_1}
\end{figure}

\begin{figure}[htbp!]
    \centering
    \includegraphics[width=10cm]{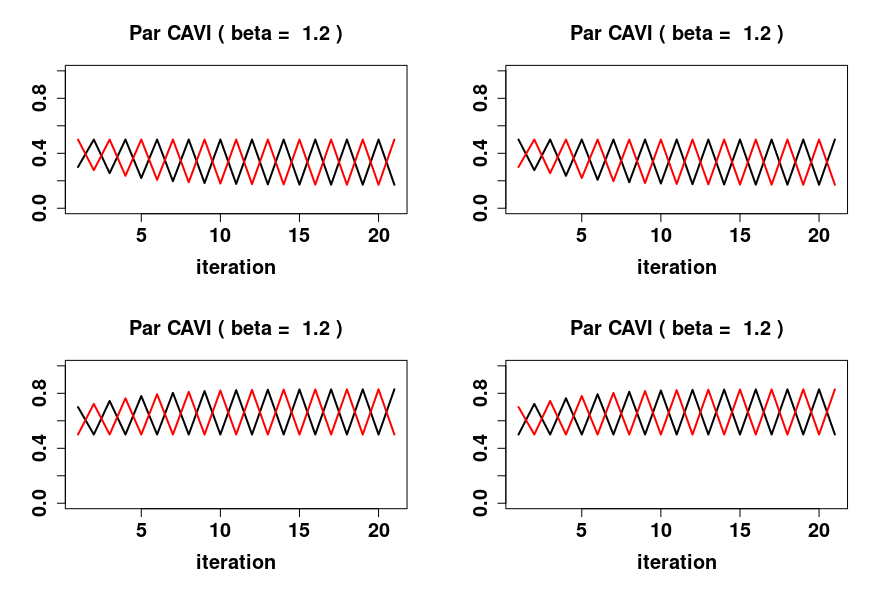}
    \caption{A plot of the first 20 iterations of the parallel update CAVI algorithm at various initializations for $\beta=1.2$. In each of the plots the $\zeta$ updates are black and the $\xi$ updates are red. The upper left plot is an initialization of $\zeta_0=0.3$ and $\xi_0=0.5$, we see that this converges to the two-cycle $\mathcal{C}_4=\{(c_0,1\slash 2), (1\slash2, c_0)\}$. The upper right is an initialization of $\zeta_0=0.5$ and $\xi_0=0.3$, we see that this initialization converges to the two cycle $\mathcal{C}_4$. The lower left is is an initialization of $\zeta_0=0.7$ and $\xi_0=0.5$, we see that this initialization converges to the two cycle $\mathcal{C}_5=\{(c_1,1\slash 2), (1\slash2, c_1)\}$. The lower right plot is an initialization of $\zeta_0=0.5$ and $\xi_0=0.7$, we see that this converges to the two-cycle $\mathcal{C}_5$}
    \label{fig:Par_CAVI_beta12_2}
\end{figure}

We now present the main result for the parallel dynamics. 
\begin{theorem}[Parallel Dynamics]\label{thm:parllel}
  	The dynamics of the parallel system (\ref{sys:par.cavi}) are as follows
  	\begin{enumerate}
  		\item For $\beta< -1$, the system has stable fixed points $(c_1,c_0)$ and $(c_0, c_1)$, and one unstable fixed point $(1\slash2, 1\slash 2)$, where $c_0$ and $c_1$ are the fixed points of (\ref{fn:sigmoid.2}). Furthermore the system exhibits periodic behavior in the form of 2-cycles. The  asymptotically stable 2-cycle, $\mathcal{C}_1=\{(c_0,c_0), (c_1,c_1)\}$ and asymptotically unstable 2-cycles, 
  		\[\mathcal{C}_2=\{(1\slash 2, c_1), (c_0, 1\slash 2)\} \text{ and } \mathcal{C}_3=\{(1\slash 2, c_0), (c_1, 1\slash 2)\}\].
  		The stable sets are
  		\begin{eqnarray*}
  		W^s(c_0,c_1)&&=[0,1\slash2)\times (1\slash2, 1] \\
  		W^s(c_1,c_0)&&= (1\slash2, 1] \times [0,1\slash2)\\
  		W^s(\mathcal{C}_1)&&= \left([0,1\slash2)\times [0, 1\slash 2)\right) \cup \left( (1\slash 2, 1]\times (1\slash 2,1] \right)\\
  		W^s(\mathcal{C}_2)&&= \left([0,1\slash2)\times \{ 1\slash 2\}\right) \cup \left( \{1\slash 2\}\times (1\slash 2,1] \right)\\
  		W^s(\mathcal{C}_3)&&= \left([0,1\slash2)\times \{ 1\slash 2\}\right) \cup \left( \{1\slash 2\}\times (1\slash 2,1] \right).
  		\end{eqnarray*}
  		\item For $-1\leq \beta\leq 1$, the system has a global attracting fixed point $(1\slash2, 1\slash 2)$. 
  		\item For $\beta>1$, the system has stable fixed points $(c_0,c_0)$ and $(c_1, c_1)$, and one unstable fixed point $1\slash2$, where $c_0$ and $c_1$ are the fixed points of (\ref{fn:sigmoid.2}). Furthermore the system exhibits periodic behavior in the form of 2-cycles. The  asymptotically stable 2-cycle, $\mathcal{C}_3=\{(c_0,c_0), (c_1,c_1)\}$ and asymptotically unstable 2-cycles, $\mathcal{C}_4=\{(1\slash 2, c_0), (c_1, 1\slash 2)\}$ and $\mathcal{C}_5=\{(1\slash 2, c_1), (c_1, 1\slash 2)\}$. The stable sets are
  		\begin{eqnarray*}
  		W^s(c_0,c_1)&&=[0,1\slash2)\times (1\slash2,1]\\
  		W^s(c_1,c_0)&&= (1\slash2, 1]\times [0,1\slash2)\\
  		W^s(\mathcal{C}_3)&&= \left([0,1\slash2)\times [0, 1\slash 2)\right) \cup \left( (1\slash 2, 1]\times (1\slash 2,1] \right)\\
  		W^s(\mathcal{C}_4)&&= \left([0,1\slash2)\times \{ 1\slash 2\}\right) \cup \left( \{1\slash 2, 1\}\times [0,1\slash 2) \right)\\
  		W^s(\mathcal{C}_5)&&= \left(\{1\slash2\} \times ( 1\slash 2,1]\right) \cup \left( (1\slash 2, 1]\times \{1\slash 2\} \right).
  		\end{eqnarray*}
  	\end{enumerate}
   The system has no $p$-periodic points for $p>2$. The system under goes a PD bifurcation at $\beta=-1$ and a pitchfork bifurcation at $\beta=1$.
\end{theorem}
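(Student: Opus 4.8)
The plan is to run everything through the decoupling already displayed in diagram \eqref{diagram:decoupling}. Write $\phi(x):=\sigma(2x-1,2\beta)$ for the one-variable map and $g(x):=\sigma(2\sigma(2x-1,2\beta)-1,2\beta)$ for its second iterate, the function \eqref{fn:sigmoid.2}. Then the one-step map is $F(\zeta,\xi)=(\phi(\xi),\phi(\zeta))$ and a direct computation gives $F^2(\zeta,\xi)=(g(\zeta),g(\xi))$, so that the even sub-orbit is $(\zeta_{2k},\xi_{2k})=(g^k(\zeta_0),g^k(\xi_0))$ and the odd sub-orbit is $(\zeta_{2k+1},\xi_{2k+1})=(g^k(\phi(\xi_0)),g^k(\phi(\zeta_0)))$. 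Everything then reduces to the already-established dynamics of $g$ (Theorem~\ref{thm:sigmoid2.dynamics}), of $\phi$ (Theorem~\ref{thm:sigmoid.dynamics}), and the elementary bookkeeping of how $\phi$ permutes the relevant invariant sets. I will use two structural facts about $\phi$: the label-swapping symmetry $\phi(1-x)=1-\phi(x)$ (so $1/2$ is fixed and $c_0(\beta),c_1(\beta)$ are mirror images), and that for $\beta>0$ the increasing map $\phi$ carries each of $[0,1/2),\{1/2\},(1/2,1]$ into itself, while for $\beta<0$ the decreasing map $\phi$ swaps the two open halves and fixes $1/2$; together with Theorem~\ref{thm:sigmoid.dynamics} this pins down the action of $\phi$ on the fixed-point set $\{c_0,1/2,c_1\}$ of $g$ (for $\beta>1$, $\phi$ fixes all three; for $\beta<-1$, $\phi$ fixes $1/2$ and swaps $c_0\leftrightarrow c_1$).

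First I would enumerate the invariant objects. A fixed point of $F$ is a fixed point of $F^2=(g,g)$, hence has both coordinates in $\{1/2\}$ (if $|\beta|\le1$) or in $\{c_0,1/2,c_1\}$ (if $|\beta|>1$) by Theorem~\ref{thm:sigmoid2.dynamics}; imposing in addition $\zeta=\phi(\xi)$, $\xi=\phi(\zeta)$ and using the action of $\phi$ above isolates $(1/2,1/2)$ when $|\beta|\le1$, the points $(c_0,c_0),(1/2,1/2),(c_1,c_1)$ when $\beta>1$, and $(c_0,c_1),(1/2,1/2),(c_1,c_0)$ when $\beta<-1$. Running the same enumeration over $\{c_0,1/2,c_1\}^2$ and discarding the fixed points produces all $2$-cycles: for $\beta>1$, $\{(c_0,c_1),(c_1,c_0)\}$ together with the two mixed cycles $\{(c_0,1/2),(1/2,c_0)\}$, $\{(c_1,1/2),(1/2,c_1)\}$; for $\beta<-1$, the diagonal cycle $\{(c_0,c_0),(c_1,c_1)\}$ together with $\{(c_0,1/2),(1/2,c_1)\}$, $\{(c_1,1/2),(1/2,c_0)\}$. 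There are no $p$-periodic points with $p>2$: if $x$ has minimal $F$-period $p$ then each coordinate is a $g$-periodic point of period dividing $p$, hence a $g$-fixed point since $g$ is strictly increasing on $[0,1]$ (Theorem~\ref{thm:sigmoid2.dynamics}), hence $F^2(x)=x$ and $p\mid 2$.

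Next I would do stability and basins. The Jacobian $DF(\zeta,\xi)$ has zero diagonal and off-diagonal entries $\phi'(\xi),\phi'(\zeta)$, so its eigenvalues are $\pm\sqrt{\phi'(\zeta)\phi'(\xi)}$: at $(1/2,1/2)$ these are $\pm\beta$, giving the global attractor for $|\beta|<1$ and a repeller for $|\beta|>1$, while at the other fixed points the product $\phi'(\zeta)\phi'(\xi)$ equals $g'$ of a coordinate (through $\phi(c_0)=c_1$ when $\beta<-1$, or through $\phi(c_i)=c_i$ when $\beta>1$), which has modulus $<1$ by Theorems~\ref{thm:sigmoid.dynamics}--\ref{thm:sigmoid2.dynamics}; a $2$-cycle point has $DF^2=\mathrm{diag}(g'(\cdot),g'(\cdot))$, so a cycle is stable iff all its coordinates are the stable $g$-fixed points $c_0,c_1$ and unstable once a coordinate equals $1/2$ (where $g'=\beta^2>1$) --- this is why the mixed cycles are unstable. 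For the global claims and the explicit stable sets I would push the even/odd sub-orbit description coordinatewise through Theorem~\ref{thm:sigmoid2.dynamics}: e.g. for $\beta<-1$ with $\zeta_0,\xi_0<1/2$ the even sub-orbit tends to $(c_0,c_0)$ while, since $\phi$ flips the half-intervals, the odd sub-orbit tends to $(c_1,c_1)$, so the full orbit converges to the $2$-cycle $\{(c_0,c_0),(c_1,c_1)\}$; with $\zeta_0<1/2<\xi_0$ both sub-orbits tend to $(c_0,c_1)$, a genuine fixed point; and the lines $\{\zeta_0=1/2\}\cup\{\xi_0=1/2\}$ feed exactly the mixed cycles. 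The cases $\beta>1$ (now $\phi$ preserving, not swapping, the halves) and $|\beta|\le1$ are identical, the endpoints $|\beta|=1$ needing no separate argument since $g$ remains a global attractor to $1/2$ there. For the bifurcations I would use that the diagonal $\{\zeta=\xi\}$ and the anti-diagonal $\{\zeta+\xi=1\}$ are both $F$-invariant: on the diagonal $F$ acts as $\phi$, on the anti-diagonal (in the coordinate $\zeta$) as $\tilde\phi:=1-\phi$, which satisfies $\tilde\phi^2=g$ and has multiplier $\tilde\phi'(1/2)=-\beta$. Hence at $\beta=1$ the diagonal carries the supercritical pitchfork of $\phi$ (the hypotheses of Theorem~\ref{thm:pitchfork} hold, via Table~\ref{tab:derivative}), producing $(c_0,c_0),(c_1,c_1)$, and at $\beta=-1$ the diagonal carries the period doubling of $\phi$ (Theorem~\ref{thm:pdb}), producing the stable $2$-cycle $\{(c_0,c_0),(c_1,c_1)\}$, with the complementary objects thrown off on the anti-diagonal; the sign of $\phi'(1/2)=\beta$ at criticality is the only thing distinguishing the pitchfork at $\beta=1$ from the PD at $\beta=-1$.

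I expect the main obstacle to be not any individual estimate --- every local computation is inherited from Theorems~\ref{thm:sigmoid.dynamics} and~\ref{thm:sigmoid2.dynamics} --- but the combinatorial bookkeeping in the stability/basin step: in each of the ranges $\beta<-1$ and $\beta>1$ there are three fixed points, three $2$-cycles, and about a dozen initialization regimes, and one must carefully track which half-interval $\phi$ sends each coordinate into (preserved for $\beta>0$, swapped for $\beta<0$) so that the even and odd sub-orbits are matched to the correct limit, and in particular so that the measure-zero stable sets of the unstable mixed $2$-cycles lying in $\{\zeta=1/2\}\cup\{\xi=1/2\}$ are not conflated with the full-dimensional basins of the stable objects.
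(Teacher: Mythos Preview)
Your proposal is correct and follows essentially the same route as the paper: reduce the two-dimensional map $F$ to one-dimensional dynamics via the decoupling $F^2(\zeta,\xi)=(g(\zeta),g(\xi))$ and the even/odd subsequence description, then read off fixed points, cycles, stability, and basins from Theorems~\ref{thm:sigmoid.dynamics} and~\ref{thm:sigmoid2.dynamics}. The paper's proof proceeds more narratively, checking each initialization case by tracking the subsequences \eqref{seq:zeta.even}--\eqref{seq:xi.odd}, whereas you organize the same content around the explicit Jacobian eigenvalues $\pm\sqrt{\phi'(\zeta)\phi'(\xi)}$ and the sign-of-$\beta$ rule for how $\phi$ acts on the half-intervals; both arrive at the same classification. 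Your use of the $F$-invariant diagonal $\{\zeta=\xi\}$ and anti-diagonal $\{\zeta+\xi=1\}$ to reduce the bifurcation verification to the one-dimensional Theorems~\ref{thm:pdb} and~\ref{thm:pitchfork} is a clean touch that the paper does not make explicit (it simply asserts the bifurcation type after completing the fixed-point/cycle census).
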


\begin{proof}
The dynamics of the system defined by $F$ in (\ref{eqn:par.sys}) is equivalent to the dynamics of the system generated by the subsequences (\ref{seq:zeta.even}), (\ref{seq:zeta.odd}), (\ref{seq:xi.even}), (\ref{seq:xi.odd}). The dynamics of each of these subsequences is governed by the functions (\ref{fn:sigmoid.1}) and (\ref{fn:sigmoid.2}). By theorem \ref{thm:sigmoid.dynamics} we have the behavior for each of the subsequences (\ref{seq:zeta.even}), (\ref{seq:zeta.odd}), (\ref{seq:xi.even}), (\ref{seq:xi.odd}). For $\vert \beta\vert<1$, (\ref{fn:sigmoid.1}) has a globally stable fixed point at $x_*=1\slash 2$ and thus the only fixed point in the parallel system is $(1\slash 2,1\slash2)$ which must be globally stable. For $\beta=\pm 1$, the fixed point $x_0=1\slash 2$ is asymptotically stable by theorem \ref{thm:nonhyperbolic}. 

For $\beta>1$, (\ref{fn:sigmoid.1}) bifurcates. We have the unstable fixed point $x_*=1\slash 2$, as well as the two locally stable fixed points, $c_0$ with stable set $W^s(c_0)=[0,1\slash2)$, and $c_1$ with stable set $W^s(c_1)=(1\slash2,1]$. Returning to the system generated by $F$, if we consider the initialization $(\zeta_0,\xi_0)=(c_0,c_0)$ then by the sequence construction of $\zeta_n$, given in (\ref{seq:zeta.even}) and (\ref{seq:zeta.odd}), we see that $\zeta_{n}=c_0$ for $n\geq 1$, as $c_0$ is a fixed point of (\ref{fn:sigmoid.1}) for $\beta>1$. Similarly using the sequence construction of $\xi_n$, given in (\ref{seq:xi.even}) and (\ref{seq:xi.odd}), we see that $\xi_{n}=c_0$ for $n\geq 1$, as $c_0$ is a fixed point of (\ref{fn:sigmoid.1}) for $\beta>1$. Therefore $(c_0,c_0)$ is a fixed point. An analogous argument shows that $(c_1,c_1)$ is also a fixed point. The parallel system has the stable fixed points $(c_0,c_0)$ with stable set $W^s(c_0,c_0)=W^s(c_0)\times W^s(c_0)$ and $(c_1,c_1)$ with stable set $W^s(c_1,c_1)=W^s(c_1)\times W^s(c_1)$. After the bifurcation at $\beta=1$ the parallel system also contains 2-cycles. Using the sequence construction we see that $\mathcal{C}_3=\{(c_1,c_0),(c_0,c_1)\}$ is an asymptotically stable 2-cycle in the parallel system, with stable subspace $W^s(\mathcal{C}_3)=(1\slash2,1]\times [0,1\slash2) \cup [0,1\slash2)\times (1\slash2,1]$. Additionally we have two asymptotically unstable 2-cycles $\mathcal{C}_4=\{(c_0,1\slash 2),(1\slash 2,c_0)\}$ and $\mathcal{C}_5=\{(c_1,1\slash 2),(1\slash 2,c_1)\}$. Perturbing the $1\slash 2$ coordinate in the unstable cycle pushes it into the basin of attraction for one of the fixed points or the asymptotically stable 2-cycle. The stable sets are $W^s(\mathcal{C}_4)= \left([0,1\slash2) \times \{ 1\slash 2\}\right) \cup \left( \{1\slash 2, 1\}\times [0,1\slash 2) \right)$, $W^s(\mathcal{C}_5)= \left(\{1\slash2\} \times ( 1\slash 2,1]\right) \cup \left( (1\slash 2, 1]\times \{1\slash 2\} \right)$. The dynamics of $F$ lack any $p$-period point and cycles for $p>2$ as a consequence of its construction from (\ref{fn:sigmoid.2}). 

For $\beta<-1$, (\ref{fn:sigmoid.1}) bifurcates. We have the unstable fixed point $x_*=1\slash 2$, as well as the stable two cycle, $\mathcal{C}=\{c_0,c_1\}$ with stable set $W^s(\mathcal{C})=[0,1\slash2)\cup (1\slash 2, 1]$. Returning to the system generated by $F$, if we consider the initialization $(\zeta_0,\xi_0)=(c_0,c_1)$ then by the sequence construction of $\zeta_n$, given in (\ref{seq:zeta.even}) and (\ref{seq:zeta.odd}), we see that $\zeta_{n}=c_0$ for $n\geq 1$, as $\mathcal{C}$ is a 2-cycle of (\ref{fn:sigmoid.1}) for $\beta<-1$. Similarly using the sequence construction of $\xi_n$, given in (\ref{seq:xi.even}) and (\ref{seq:xi.odd}), we see that $\xi_{n}=c_1$ for $n\geq 1$, as $\mathcal{C}$ is a 2-cycle of (\ref{fn:sigmoid.1}) for $\beta<-1$. Therefore $(c_0,c_1)$ is a fixed point. An analogous argument shows that $(c_1,c_0)$ is also a fixed point. The parallel system has the stable fixed points $(c_0,c_1)$ with stable set $W^s(c_0,c_1)=W^s(c_0)\times W^s(c_1)$ and $(c_1,c_0)$ with stable set $W^s(c_1,c_0)=W^s(c_1)\times W^s(c_0)$, where $W^s(c_0)=[0,1\slash2)$ and $W^s(c_1)=(1\slash2,1]$. After the bifurcation at $\beta=1$ the parallel system also contains 2-cycles. Using the sequence construction we see that  $\mathcal{C}_1=\{(c_0,c_0),(c_1,c_1)\}$ is an asymptotically stable 2-cycle in the parallel system, with stable subspace $W^s(\mathcal{C}_1)= W^s(c_0)\times W^s(c_0) \cup W^s(c_1)\times W^s(c_1)$. Additional we have two asymptotically unstable 2-cycles $\mathcal{C}_2=\{(c_0,1\slash 2),(1\slash 2,c_1)\}$ and $\mathcal{C}_3=\{(c_1,1\slash 2),(1\slash 2,c_0)\}$. Perturbing the $1\slash 2$ coordinate in the unstable cycle pushes it into the basin of attraction for one of the fixed points or the asymptotically stable 2-cycle. The stable sets are $W^s(\mathcal{C}_3)= \left([0,1\slash2)\times [0, 1\slash 2)\right) \cup \left( (1\slash 2, 1]\times (1\slash 2,1] \right)$, $W^s(\mathcal{C}_4)= \left([0,1\slash2)\times \{ 1\slash 2\}\right) \cup \left( \{1\slash 2, 1\}\times [0,1\slash 2) \right)$, $W^s(\mathcal{C}_5)= \left(\{1\slash2\} \times ( 1\slash 2,1]\right) \cup \left( (1\slash 2, 1]\times \{1\slash 2\} \right)$. The dynamics of $F$ lack any $p$-period point and cycles for $p>2$ as a consequence of its construction from (\ref{fn:sigmoid.2}). 

This completes the characterization of the dynamics of $F$ for $\beta\in \mathbb{R}$. 
\end{proof}

\section{Edward--Sokal Coupling}
\label{sec:EScouple}
In this section we introduce the Edward--Sokal coupling following \cite{grimmett2006random}. We introduce a variational family for the Edward--Sokal coupling and derive the variational updates for this model. We show that the variational updates converge to a unique solution in a larger range than the equivalent Dobrushin regime for the corresponding Ising measure. 

\subsection{Random Cluster Model}
	Let $G=(V,E)$ be a finite graph. Let $e=\langle x,y\rangle \in E$ denote an edge in $G$ with endpoints $x,y\in V$.  Denote $\Sigma=\{1,2,\ldots,q\}^V$, $\Omega=\{0,1\}^E$, and $\mathcal{F}$ denote the powerset of $\Omega$. The random cluster model is a 2 parameter probability measure with an edge weight parameter $p\in[0,1]$ and a cluster weight parameter $q\in\{2,3,\ldots\}$ on $(\Omega, \mathcal{F})$ given by
	\begin{equation*}
	    \phi_{p,q}(\omega)\propto \left\{ \prod_{e\in E} p^{\omega(e)}(1-p)^{(1-\omega(e))}\right\}q^{\kappa(\omega)},
	\end{equation*}
	where $\kappa(\omega)$ denoted the number of connected components in the subgraph corresponding to $\omega$. 
	The partition function for the random cluster model is 
	\begin{equation*}
	   \mathcal{Z}_{RC}=\sum_{\omega\in\Omega} \left\{ \prod_{e\in E} p^{\omega(e)}(1-p)^{(1-\omega(e))}\right\}q^{\kappa(\omega)}. 
	\end{equation*}
	For $q=2$ the the random cluster model reduces to the Ising model on $G$. 
	
	The Edward--Sokal Coupling is a probability measure $\mu$ on $\Sigma\times\Omega$ given by
	\begin{equation}
	    \mu(\sigma,\omega)\propto \prod_{e\in E}\left[(1-p)\delta_{\omega(e),0}+p\delta_{\omega(e),1}\delta_{e}(\sigma)\right],
	\end{equation}
	where  $\delta_{a,b}=1(a=b)$, and $\delta_e(\sigma)=1(\sigma_{x}=\sigma_{y})$, for $e=(x,y)\in E$.

	It is well known that in the special case, $p=1-e^{-\beta}$ and $q=2$ the $\Sigma$-marginal of the ES coupling is the Ising model, the $\Omega$-marginal is the random cluster model \cite{grimmett2006random}. We are interested in better understanding how the convergence of the CAVI algorithm on the ES coupling compares to the convergence of the CAVI algorithm on the Ising model. 
	
\subsection{VI Objective Function}
	To calculate the VI updates for each variable we may need to make use of the alternative characterization of the ES coupling 
	$$\mu(\sigma,\omega)\propto \psi(\sigma)\phi_{p,1}(\omega)1_F(\sigma,\omega)$$
	where $\psi$ is uniform measure on $\Sigma$ and  $\phi_{p,1}(\omega)$ is a product measure on $\Omega$ 
	\begin{equation}
	    \phi_{p,1}(\omega)=\prod_{e\in E}p^{\omega(e)}(1-p)^{(1-\omega(e))}
	\end{equation}
	and 
	\begin{equation}
	    F=\left\{(\sigma,\omega): \delta_\omega(e)=1\implies \delta_e(\sigma)=1 \right\}
	\end{equation}
The variational family that we will be optimizing over is 
\begin{eqnarray}
q(\sigma,\omega)=q_1(\sigma_1)q_2(\sigma_2)q_0(\omega)1_{F}(\sigma,\omega). \label{ES.objective}
\end{eqnarray}
We have added the indicator on the set $F$ to eliminate the configurations $(\sigma\omega)$ that are not well defined in the variational objective. We will use the convention that $0\log(0)=0$. 
\subsection{VI updates}
The ELBO that corresponds to the variational family (\ref{ES.objective}) is 
\small 
\begin{eqnarray*}
\mbox{ELBO}(x_1,x_2,y,p)&=&x_1x_2y\log(x_1x_2y)-x_1x_2y\log(1-p)\\
&+&(1-x_1)x_2y\log((1-x_1)x_2y)-(1-x_1)x_2y\log(1-p)\\
&+& x_1(1-x_2)y\log(x_1(1-x_2)y)-x_1(1-x_2)y\log(1-p)\\
&+&(1-x_1)(1-x_2)y\log((1-x_1)(1-x_2)y)-(1-x_1)(1-x_2)y\log(1-p)\\
&+&x_1x_2(1-y)\log(x_1x_2(1-y))-x_1x_2(1-y)\log(p)\\
&+&(1-x_1)(1-x_2)(1-y)\log((1-x_1)(1-x_2)(1-y))-(1-x_1)(1-x_2)(1-y)\log(p). 
\end{eqnarray*}
\normalsize
Taking the derivative with respect to $x_1$ and simplifying gives us 
\small \begin{eqnarray*}
\mbox{ELBO}_1(x_1,x_2,y,p)&=&y\log\left(\frac{x_1}{1-x_1}\right)+(1-y)\log\left(\frac{1}{1-x_1}\right)\\
&+&x_2(1-y)\log(x_1(1-x_1))+
x_2(1-y)\log\left(\frac{x_2(1-x_2)(1-y)^2}{p^2}\right)\\
&+&\log\left(\frac{p}{(1-x_2)(1-y)}\right)+(2x_2-1)(1-y).
\end{eqnarray*}
\normalsize
Taking the derivative with respect to $x_2$ and simplifying gives us 
\small
\begin{eqnarray*}
\mbox{ELBO}_2(x_1,x_2,y,p)&=&y\log\left(\frac{x_2}{1-x_2}\right)+(1-y)\log\left(\frac{1}{1-x_2}\right)\\
&+&x_1(1-y)\log(x_2(1-x_2))+
x_1(1-y)\log\left(\frac{x_1(1-x_1)(1-y)^2}{p^2}\right)\\
&+&\log\left(\frac{p}{(1-x_1)(1-y)}\right)+(2x_1-1)(1-y). 
\end{eqnarray*}
\normalsize
Taking the derivative with respect to $y$ and simplifying gives us 
\small
\begin{eqnarray*}
\mbox{ELBO}_y(x_1,x_2,y,p)&=&x_1x_2\log\left(\frac{y}{1-y}\right)+x_1x_2\log\left(\frac{p}{1-p}\right)\\
&+& (1-x_1)(1-x_2)\log\left(\frac{y}{1-y}\right)+(1-x_1)(1-x_2)\log\left(\frac{p}{1-p}\right)\\
&+& (1-x_1)x_2\log\left(\frac{(1-x_1)x_2y}{1-p}\right)+x_1(1-x_2)\log\left(\frac{x_1(1-x_2)y}{1-p}\right)\\
&+&(1-x_1)x_2+x_1(1-x_2). 
\end{eqnarray*}
\normalsize
Absence of  closed form updates for any of the variables limits our ability to study the convergence of the system with classical dynamical systems techniques. Instead we look at the long evolution behavior of the system by plotting 100 iterations of the CAVI updates which are generated from the following system 
\begin{eqnarray*}
x_1(t+1)&=&\text{argmin}_{z\in (0,1)} \vert \mbox{ELBO}_1(z,x_2(t),y(t),p)\vert, \\
x_2(t+1)&=&\text{argmin}_{z\in (0,1)} \vert \mbox{ELBO}_2(x_1(t+1),z,y(t),p)\vert, \\
y(t+1)&=&\text{argmin}_{z\in (0,1)} \vert \mbox{ELBO}_y(x_1(t+1),x_2(t+1),z,p)\vert. 
\end{eqnarray*}
We generate the argmin of the free variable $z$ from a line search with a step size of $\Delta=10^{-6}$. Running these simulations we find that the iterations of $x_1(t),x_2(t),y(t)$ converge to a global solution within about $T=20$ time steps from any initialization $x_1(0),x_2(0),y(0)\in (0,1)$ and any $\beta>0$. It is evident that using the ES coupling, we get global convergence of the algorithm outside of the Dobrushin regime of the corresponding paramagnetic Ising model. The figures depicting the simulation results of convergence of the variational inference algorithm in the Edward--Sokal coupling can be found below in Figures \eqref{fig:ESp5iterations}-\eqref{fig:ESp01elbo}. 

\begin{figure}[htp!]
    \centering
    \includegraphics[width=10cm]{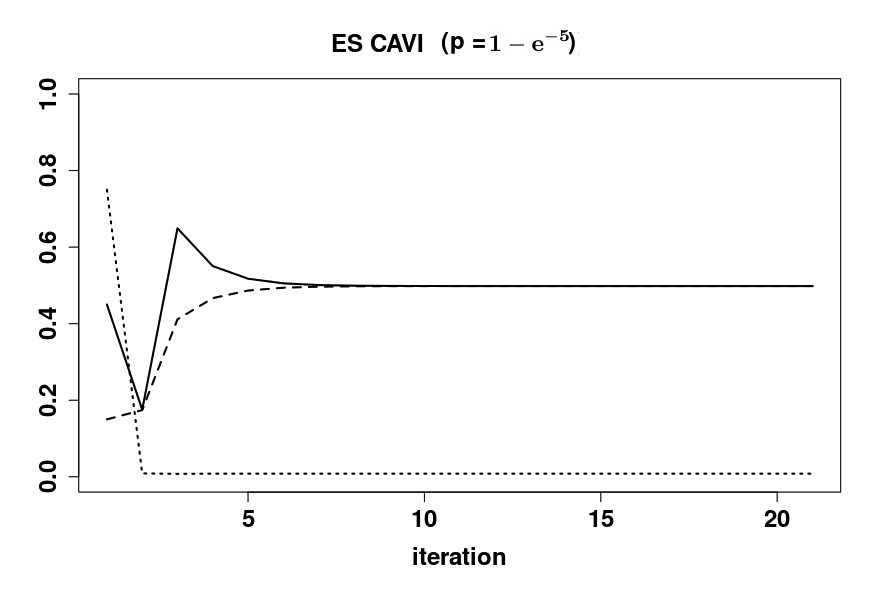}
    \caption{A plot of the 20 iterations of the ES updates for $p=1-e^{-5}$ from a uniformly random initialization. Each of the lines represents a different parameter. The solid line is $x_1$, the dashed line is $x_2$, and the dotted line is $y$. We see convergence to a unique fixed point for each of the variables.} 
    \label{fig:ESp5iterations}
\end{figure}

\begin{figure}[htp!]
    \centering
    \includegraphics[width=10cm]{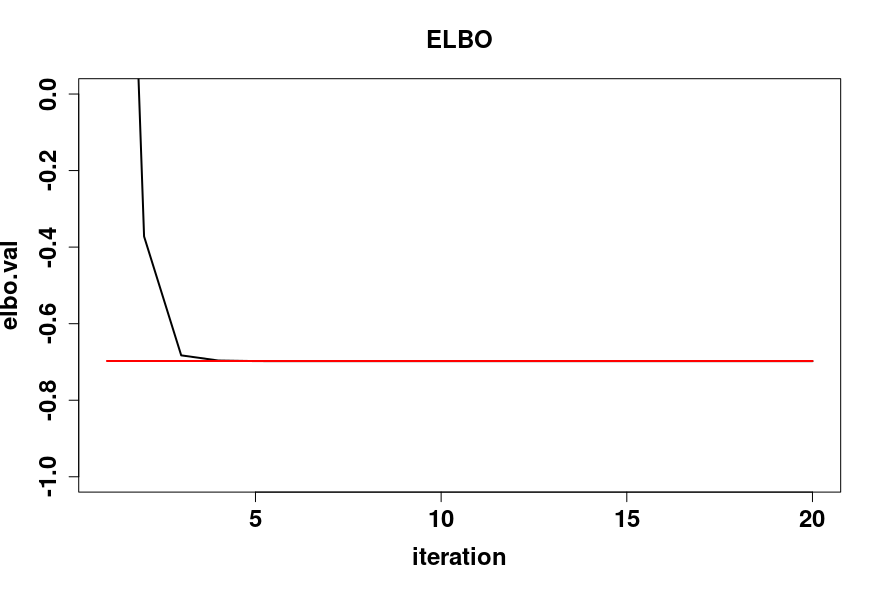}
    \caption{A plot of the ELBO of the ES coupling for $p=1-e^{-5}$. The red line denotes the global minimum ELBO value.}
    \label{fig:ESp5elbo}
\end{figure}

\begin{figure}[htp!]
    \centering
    \includegraphics[width=10cm]{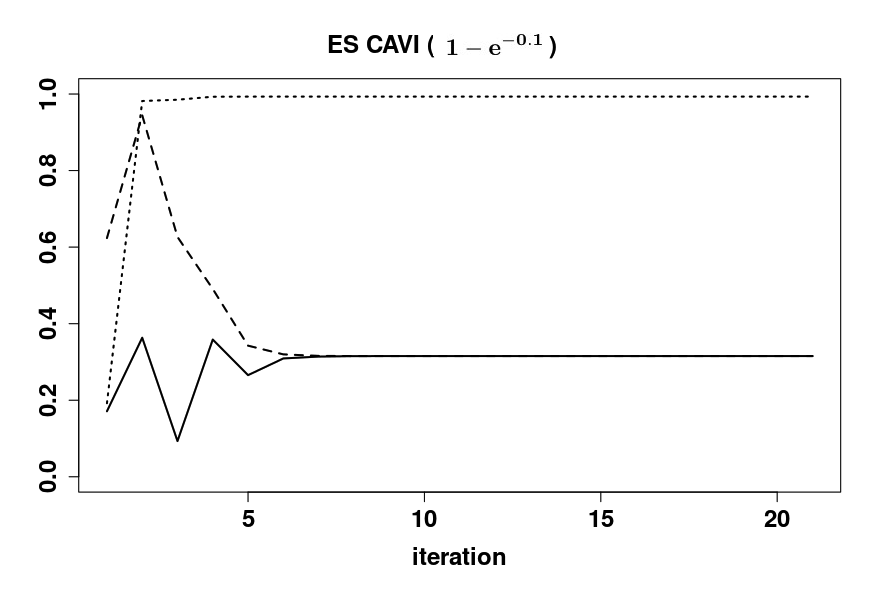}
    \caption{A plot of the 20 iterations of the ES updates for $p=1-e^{-0.1}$ from a uniformly random initialization. Each of the lines represents a different parameter. The solid line is $x_1$, the dashed line is $x_2$, and the dotted line is $y$. We see convergence to a unique fixed point.}
    \label{fig:ESp01iterations}
\end{figure}

\begin{figure}[htp!]
    \centering
    \includegraphics[width=10cm]{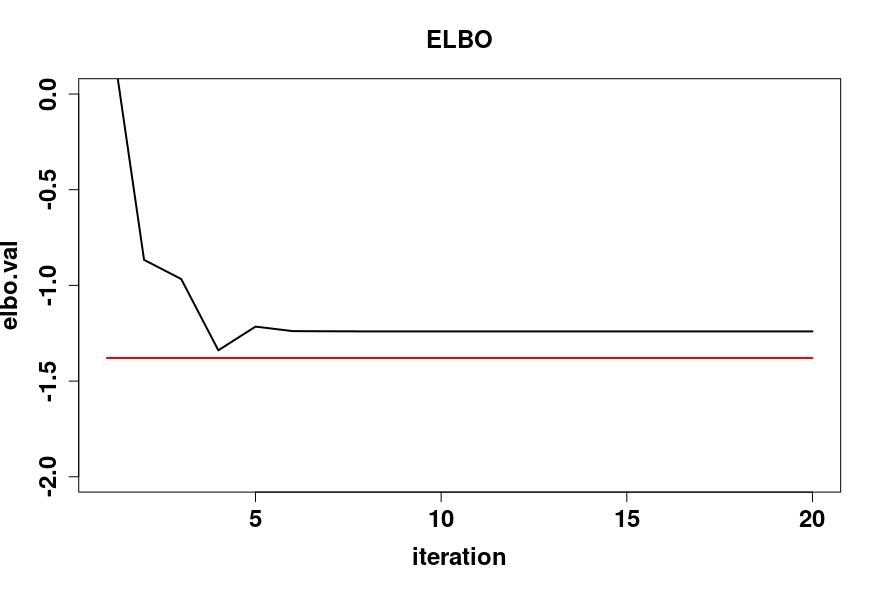}
    \caption{A plot of the ELBO of the ES coupling for $p=1-e^{-0.1}$. The red line denotes the global minimum ELBO value.}
    \label{fig:ESp01elbo}
\end{figure}

\newpage
\section{Conclusions}
\label{sec:conclusions} 
This paper demonstrates the use of classical dynamical systems and bifurcation theory to study the convergence properties of the CAVI algorithm of the Ising model on two nodes. In our simple model we are able to provide the complete dynamical behavior for the Ising model on two node. Interestingly we find that the sequential CAVI algorithm and parallelized CAVI algorithm are not topologically conjugate owing to the presence of periodic behavior in the parallelized CAVI. 

There are several open questions that stem from this work. What are types of dynamical behavior present in the Ising model CAVI in graphs with 3 or more nodes? What types of codimension 2 bifurcations occur in the CAVI algorithm for the Ising model? Extension to high dimensional models will require more sophisticated tools from dynamical systems. This is due to two simplifications that our arose in the above analysis. The Ising model on two nodes has the special property that both the sequential and parallel updates in the two variables case can be written as two separate one variable dynamical systems allowing for a simplified analysis. The analysis of systems in which the CAVI updates cannot be decoupled can be drastically complicated when it becomes necessary to compute center manifolds. This is the case with the Ising model on any graph with more than two nodes. While the theory needed to investigate these types of functions has long been established,  computational challenges needed to compute the center manifolds required to analyze the bifurcations of such systems remains. Software to handle such calculations has only recently been developed \cite{kuznetsov2019numerical}. A second complication arises when the codimension of the model is larger than two. Bifurcations of codimension 3 are so exotic that they are not well studied \cite{kuznetsov2008elements, kuznetsov2019numerical}. In practical terms this means that the convergence properties can only be studied numerically for models with a small number of parameters. Furthermore most of the numerical techniques work under the assumption of differentiability of the evolution operator and will fail to be applicable to many systems of practical interest in statistics such as the Edward--Sokal CAVI. 

An interesting question that stems from this work is the following: Can we study the convergence behavior of the CAVI algorithm in other systems to find similar parameter regimes that produce statistically optimal estimators? The answer to this question provides researchers with stable parameter regimes for the model. The non-existance of such a region would indicate the need for more expressive variational methods for the model beyond mean field methods.

Another avenue for future research lies in studying theoretical properties of other variational families. Most of the research into the theoretical properties of variational inference has focused on the mean field family due to its computational simplicity. This computational simplicity comes at the cost of limited expressive power. We are exploring convergence guarantees beyond mean field to accommodate more expressive variational families.

%

\appendix
%

\section{An overview of one dimensional dynamical systems}
\label{sec:dynamical}
The main focus of discrete dynamical systems is the asymptotic behavior of iterated systems \eqref{sequential.cavi}. Bifurcation theory studies how the dynamical behavior of a system changes as the parameter $J_{12}$  changes. We study the behavior of convergence of the CAVI algorithm by studying the autonomous discrete time dynamical system formed by the update equation \eqref{sequential.cavi}. This allows us to utilize tools from dynamical systems theory to study the behavior of the algorithm with respect to its parameters.
In this section we provide a brief overview of the necessary dynamical systems and bifurcation theory in dimension 1 used in section \ref{sec:main}. 

\subsection{Notation}
Our focus will be on parametric dynamical systems defined by a functions $f:\mathbb{R}^n\times \mathbb{R}^p\to \mathbb{R}^n$. We will call elements $\x\in\mathbb{R}^n$ elements in the state space (phase space) and elements $\alpha\in\mathbb{R}^p$ as parameters. We denote real numbers $x\in\mathbb{R}$ and real vectors in $\x =(x_1,\ldots,x_n) \in \mathbb{R}^n$ with bold. We denote the inverse of an invertible function $f$ by $f^{-1}$. The $k$-fold composition of a function $f$ with itself at a point $(\x,\alpha)$ will be denoted by $f^k(\x,\alpha)$. The $k$-fold composition of the inverse function $f^{-1}$ will be denoted $f^{-k}$. The identity function will be denoted $\mbox{id}$. We use the convention $f^0=\mbox{id}$. We denote the tensors of derivatives of $f$ by $f_\x(\x,\alpha)=(\partial f_i\slash \partial x_j)$, $f_{\x\x}(\x,\alpha)=(\partial f_i^2\slash \partial x_j \partial x_k)$,  $f_{\x\x}(\x,\alpha)=(\partial f_i^2\slash \partial x_j \partial x_k)$,  $f_{\x\x\x}(\x,\alpha)=(\partial f_i^2\slash \partial x_j \partial x_k \partial x_\ell)$, $f_\alpha(\x,\alpha)=(\partial f_i\slash \partial\alpha_j)$.

\subsection{Dynamical systems}
Dynamical systems is a classical approach to studying the convergence properties of non-linear iterative systems. These systems can be \textit{continuous in time}, for example a differential equation, or \textit{discrete in time}, for example iterations of a function from an initial point. A dynamical system is called \textit{autonomous} if the function governing the system is independent of time and \textit{non-autonomous} otherwise. The coordinate ascent variational inference for the Ising model is a discrete-time autonomous dynamical system. Before giving a complete proof of the dynamical properties of the CAVI algorithm for both the ferromagnetic and antiferromagnetic Ising model in dimension 2, we first give a basic introduction to the theory of discrete time dynamical systems and bifurcations following \cite{elaydi2007discrete, kuznetsov2008elements, kuznetsov2019numerical,wiggins2003introduction}.

Formally, a \textit{dynamical system} is tripe $\{T,X,\phi^t\}$ where $T$ is a time set, $X$ is the state space, and $\phi^t:X\to X$ is a family of evolution operators parameterized by $t\in T$ satisfying $\phi^0=id$ and $\phi^{s+t}=\phi^t\circ\phi^s$ for all $x\in X$. For a discrete time system the evolution operator is fully specified by the one-step map $\phi^1=f$, since the composition rule then defines $\phi^k=f^k$ for $k\in \mathbb{Z}$. We restrict the further discussion to discrete time dynamical systems defined by the one-step map 
\begin{eqnarray}
\x\mapsto f(\x,\alpha),\quad \x\in\mathbb{R}^n, \alpha\in\mathbb{R}^p, \label{dds}
\end{eqnarray}
where $f$ is a \textit{diffeomorphism}, a smooth function with smooth inverse, of the state space $\mathbb{R}^n$ and $\alpha$ are the parameters of the system. 

The basic geometric objects of a dynamical system are orbits in the state space and the \textit{phase portrait}, defined as follows. The \textit{phase portrait} is the partition of the state space induced by the orbits. The \textit{orbit} starting at a point $\x$ is an ordered subset of the state space $\mathbb{R}^n$ denoted $\mbox{orb}(\x)=\{f^k(\x):k\in \mathbb{Z}\}$. There are two special types of orbits, fixed points and cycles, defined below. 

A fixed point $\x_*$ of the system are points that remain fixed under the evolution of the system, ones that satisfies $\x_*=f(\x_*)$. We can classify fixed points of the system by studying the local behavior of the system near the fixed point. To do this we consider small perturbations of the system near the fixed point. A fixed point $\x_*$ is said to be \textit{locally stable} if points that are near the fixed point do not move to far away from the fixed point as the system evolves. Formally, if for any $\varepsilon >0$ there exists $\delta>0$ such that for all $x$ with $\vert \x-\x_*\vert <\delta$ we have $\vert f^k(\x)-\x_*\vert <\varepsilon$ for all $k>0$. A fixed point is called \textit{semi-stable from the right} if for any $\varepsilon >0$ there exists $\delta>0$ such that for all $x$ with $ 0< \x-\x_*<\delta$ we have $\vert f^k(\x)-\x_*\vert <\varepsilon$ for all $k>0$ (semi-stable from the left is defined analogously). It is said to be \textit{locally unstable} otherwise.  A fixed point $\x_*$ is \textit{locally attracting} if all points in a small neighborhood converge to the fixed point as we let the system evolve. Formally, if there exists an $\eta>0$ such that $\vert \x-\x_*\vert <\eta$ implies $f^n(\x)\to \x_*$ as $n\to \infty$. A fixed point $\x_*$ is \textit{locally asymptotically stable} if it is both locally stable and attracting. A fixed point $\x_*$ is \textit{locally semi-asymptotically stable from the right} if it is both locally semi-stable from the right and $\lim_n f^n(x)=x_*$ for $0< \x-\x_*<\eta$ for some $\eta$. It is \textit{globally asymptotically stable} if the point is attracting for all $\x$ in the state space. 

A \textit{cycle} is a periodic orbit of distinct points $C=\{\x_0,\x_1,\ldots, \x_{K-1}\}$, where $\x_0=f(\x_{K-1})$ for some $K>0$. The minimal $K$ generating the cycle is called the \textit{period} of the cycle. A subset $S\subset\mathbb{R}^n$ is called \textit{invariant} if $f^{k}(S)\subset S,\quad k\in \mathbb{Z}$. An invariant set $S$ is called \textit{asymptotically stable} if there exists a neighborhood $U$ of $S$ such that for any point in $U$ is eventually inside the set $S$. The \textit{stable set} of $S\subset \mathbb{R}^n$ is $W^s(S)=\left\{ \x\in \mathbb{R}^n :\lim_{k\to\infty}f^{k}(\x)\in S\right\}$. If $f$ is invertible, we define the \textit{unstable set} of $S\subset \mathbb{R}^n$ is $W^u(S)=\left\{ \x\in \mathbb{R}^n :\lim_{k\to \infty}f^{-k}(\x)\in S\right\}$. The unstable set of $S$ for the forward system $f^k$, $k>0$ is the stable set of $S$ for the backward system $f^{-k}$, $k>0$. It is possible to study the behavior of points that diverge by studying points that converge under the inverse map. We can also classify the stability of $K$-cycles. We classifying the stability of the cycle as a fixed point in the map $f^K$.

Consider a discrete time dynamical system defined by a diffeomorphism $f:\mathbb{R}\times\mathbb{R}\to \mathbb{R}$. Let $x_*$ be a fixed point of $f(x,\alpha)$ and consider a nearby point $x$, $\vert x-x_*\vert=\epsilon$. Taking a Taylor expansion of the system about the fixed point gives us 
\begin{eqnarray*}
f(x,\alpha)-x_* &=& f_x(x_*,\alpha)(x-x_*) + f_{xx}(x_*,\alpha)(x-x_*)^2 + O(\vert x-x_*\vert^3). 
\end{eqnarray*}
If the Jacobian does not have modulus one and $\epsilon$ is small enough, then the contribution by the terms of $O(\vert x-x_*\vert^2)$ will be negligible, in which case the behavior of the system is governed by the the behavior of the linearization of the system $f_x(x_*,\alpha)$. Motivated by this heuristic argument let us introduce several more terms. Assume that the Jacobian $A:=f_x(x_*,\alpha)$ of the system (\ref{dds}) at a fixed point $x_*$ is non-singular. Let $n_s$ denote the number of \textit{stable eigenvalues}, those with $\vert \lambda\vert <1$, $n_c$ denote the number of \textit{critical eigenvalues}, those with $\vert \lambda\vert =1$, and $n_u$ denote the number of \textit{unstable eigenvalues}, those with $\vert \lambda\vert >1$. Notice $n=n_s+n_c+n_u$. Let $E^s$, $E^u$, and $E^c$ denote the generalized invariant eigenspace of $A$ stable, unstable, and critical eigenvalues, respectively. A fixed point $x_*$ of the system (\ref{dds}) is called \textit{hyperbolic} if $A$ is non-singular and has no critical eigenvalues with $\vert\lambda\vert=1$ ($n_c=0$). A hyperbolic fixed point is called a \textit{hyperbolic saddle} if $A$ has both stable and unstable eigenvalues ($n_sn_u\neq 0$). A fixed point is \textit{non-hyperbolic} if $A$ has critical eigenvalues ($n_c>0$). The eigenvalues of a fixed point are called the \textit{multipliers} of the fixed point. The argument given above, the stability of a dynamical system is governed by the linearization of the system in a neighborhood of a hyperbolic fixed point, can be made rigorous. To do so, we need to discuss what it means for two dynamical systems to be equivalent. 

In the following, we define the notion of equivalence for dynamical systems. Two systems are topologically equivalent if we can map orbits of one system to orbits of another system in a continuous way that preserves the order of time. The dynamical system (\ref{dds}) is called \textit{topologically equivalent} to the system 
\begin{eqnarray}
\y \mapsto g(\y,\beta), \quad \y\in \mathbb{R}^n,\quad \beta\in \mathbb{R}^p, \label{dds2}
\end{eqnarray}
if there exists a homeomorphism of the parameter space $h_p:\mathbb{R}^p\to \mathbb{R}^p$, $\beta=h_p(\alpha)$ and a parameter dependent state space homeomorphism, continuous in the first argument, $h:\mathbb{R}^n\times \mathbb{R}^p \to \mathbb{R}^n$ such that, $\textbf{y}=h(\x,\alpha)$, mapping orbits of the system (\ref{dds}) at parameter value $\alpha$ onto orbits of the system (\ref{dds2}) at parameter $\beta=h_p(\alpha)$ preserving the direction of time. If $h$ is a diffeomorphism then the systems are called \textit{smoothly equivalent}. 

Let (\ref{dds}) and (\ref{dds2}) be two topologically equivalent invertible dynamical systems. Consider the orbit of the system under the mapping $f(\x,\alpha)$ , $\mbox{orb}(\x;f,\alpha)$, and the orbit of the system $g(\textbf{y},\beta)$, $\mbox{orb}(\y;g,\beta)$. Topological equivalence means that the homeomorphism $(h(\x,\alpha),h_p(\alpha) $ maps $\mbox{orb}(\x;f,\alpha)$ to $\mbox{orb}(\textbf{y};g,\beta)$ preserving the order of time. This gives us the following commutative diagram 
\begin{eqnarray*}
\begin{tikzcd}
    \cdots \arrow[d, "h"] \arrow[r,"f"]& f^{-1}(\x,\alpha) \arrow[d, "h"] \arrow[r,"f"]& \x \arrow[d, "h"] \arrow[r,"f"]&  f(\x,\alpha) \arrow[d,"h"] \arrow[r,"f"]&  \cdots \\
    \cdots \arrow[r,"g"]&  g^{-1}(\y,\beta) \arrow[r,"g"] & \y \arrow[r, "g"]  & g(\y,\beta) \arrow[r, "g"]& \cdots.
  \end{tikzcd}
\end{eqnarray*}
The orbits being topologically equivalent means that orbit $\x$ under the mapping $h$ should produce the same orbit as mapping $\x$ to $\y=h(\x,\alpha)$ computing the orbit of $\y$ under $g(\cdot,\beta)$ and mapping back to $f(\x,\alpha)$ by $h^{-1}$, $f(\x,\alpha)=h^{-1}\circ g\circ h(\x,\alpha)$. We shall primarily be interested in the behavior of the system in a small neighborhood of an equilibrium point. A system (\ref{dds}) is called \textit{locally topologically equivalent} near an equilibrium $\x_*$ to a system (\ref{dds2}) near an equilibrium $\y_*$ if there exists a homeomorphism $h:\mathbb{R}^n\to \mathbb{R}^n$ defined in a small neighborhood $U$ of $\x_*$ with $\y_*=h(\x_*)$ that maps orbits of (\ref{dds}) in $U$ onto orbits of (\ref{dds2}) in $V=h(U)$, preserving the direction of time.

We now have enough terminology to introduce the following theorem, which shows that the dynamics of a smooth system in the neighborhood of a hyperbolic fixed point are equivalent to the dynamics of the linearization of the system,
\begin{theorem}[Grobman-Hartman]\label{thm:GH}
    Consider a smooth map 
    \begin{eqnarray}
    \x\mapsto A\x+F(\x),\quad \x\in \mathbb{R}^n, \label{GH.eqn}
    \end{eqnarray}
    where $A$ is an $n\times n$ matrix and $F(\x)=O(\Vert \x\Vert^2)$. If $\x_*=0$ is a hyperbolic fixed point of (\ref{GH.eqn}), then (\ref{GH.eqn}) is topologically equivalent near this point to its linearization 
    \begin{eqnarray*}
    \x\mapsto A\x,\quad \x\in\mathbb{R}^n.
    \end{eqnarray*}
\end{theorem}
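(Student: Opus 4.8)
The final statement is the classical Grobman--Hartman theorem for diffeomorphisms, and the plan is to reproduce the standard contraction-mapping proof (see, e.g., \cite{kuznetsov2008elements}), realizing the conjugating homeomorphism as the unique fixed point of a contraction on a space of bounded continuous maps. Since the conclusion is local, the first step is to globalize the perturbation: multiplying $F$ by a smooth cutoff supported in a small ball around the origin and using $F(\x) = O(\Vert \x \Vert^2)$, we may assume without loss of generality that $F$ is defined on all of $\mathbb{R}^n$, vanishes at $0$, and has Lipschitz constant $\delta$ as small as we wish. For $\delta$ small the map $N(\x) := A\x + F(\x)$ is then a homeomorphism of $\mathbb{R}^n$, being a small Lipschitz perturbation of the linear isomorphism $L(\x) := A\x$; it therefore suffices to produce a homeomorphism conjugating $N$ to $L$, since restricting to the ball on which $F$ was left unmodified then yields the local statement.

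Because $\x_* = 0$ is hyperbolic, $\mathbb{R}^n = E^s \oplus E^u$ is $A$-invariant, and in an adapted norm $\Vert A|_{E^s} \Vert \le a < 1$ and $\Vert (A|_{E^u})^{-1} \Vert \le a < 1$. I would seek the conjugacy in the form $h = \mathrm{id} + u$ with $h \circ N = L \circ h$; writing this out and using $L = A$ reduces it to the functional equation $u \circ N = A\, u - F$. Splitting $u = (u^s, u^u)$ along $E^s \oplus E^u$, the two components can each be rearranged into a fixed-point problem $u = \mathcal{T} u$ on the Banach space of bounded continuous maps $\mathbb{R}^n \to \mathbb{R}^n$: the stable component reads $u^s(\x) = A|_{E^s}\, u^s(N^{-1}\x) - F^s(N^{-1}\x)$ and the unstable component reads $u^u(\x) = (A|_{E^u})^{-1}\bigl(u^u(N\x) + F^u(\x)\bigr)$. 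Both blocks are contractions with factor $a$, and $\mathcal{T}$ preserves boundedness because $F$ is bounded, so the Banach fixed point theorem yields a unique bounded continuous $u$, hence a continuous $h = \mathrm{id} + u$ with $h \circ N = L \circ h$.

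The symmetric construction gives a bounded continuous $v$ with $\tilde h := \mathrm{id} + v$ satisfying $\tilde h \circ L = N \circ \tilde h$; the operator defining $v$ now involves the nonlinear term $F(\x + v(\x))$, which is exactly why the smallness of $\mathrm{Lip}(F)$ arranged in the first step is needed for this second contraction to close. To finish, one observes that $h \circ \tilde h$ commutes with $L$ and differs from the identity by a bounded map, and that the only bounded continuous $w$ with $w \circ L = L \circ w$ is $w \equiv 0$ (iterate the stable part forward and the unstable part backward and invoke boundedness, using $\Vert A|_{E^s}\Vert^n \to 0$ and $\Vert (A|_{E^u})^{-1}\Vert^n \to 0$); hence $h \circ \tilde h = \mathrm{id}$, and symmetrically $\tilde h \circ h = \mathrm{id}$ via the analogous uniqueness for $N$. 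Thus $h$ is a homeomorphism conjugating $N$ to $L$, which is what we wanted. I expect the main obstacle to be this final bookkeeping: tuning $\delta = \mathrm{Lip}(F)$ so that both the forward and the reverse conjugacy operators are genuine contractions, and then verifying cleanly that the two resulting maps are mutual inverses.
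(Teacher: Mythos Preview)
Your proof is the standard contraction-mapping argument for the Grobman--Hartman theorem and is correct as outlined; the only nontrivial point you flag yourself, namely choosing $\delta = \mathrm{Lip}(F)$ small enough that both the forward and reverse conjugacy operators contract, is handled by the cutoff construction in the first paragraph.

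However, there is nothing to compare against: the paper does \emph{not} prove this theorem. It appears in the appendix as a quoted classical result from the dynamical systems literature (with implicit reference to \cite{elaydi2007discrete, kuznetsov2008elements, kuznetsov2019numerical, wiggins2003introduction}), stated without proof and used only as background to justify linear stability analysis at hyperbolic fixed points. So your proposal is not an alternative route to the paper's argument; it is simply a self-contained proof of a result the paper takes for granted.
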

Note theorem \ref{thm:GH} is true for a general $n$-dimensional system. Theorem \ref{thm:GH} provides sufficient conditions to determine the stability of a hyperbolic fixed point of a general discrete time system,
 \begin{theorem}\label{thm:hyperbolic.stability}
     Consider a discrete time dynamical systems (\ref{dds}) where $f$ is a smooth map. Suppose for a fixed point $x_*$ that the eigenvalues of Jacobian $f_x(x_*,\alpha)$ all satisfy $\vert\lambda\vert<1$ then the fixed point is stable. Alternatively, suppose for a fixed point $x_*$ that the eigenvalues of Jacobian $f_x(x_*,\alpha)$ all satisfy $\vert\lambda\vert>1$ then the fixed point is unstable. 
 \end{theorem}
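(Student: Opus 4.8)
The plan is to prove the two assertions by a local linearization argument, reducing each case to the behaviour of the linear map $A := f_x(x_*,\alpha)$ through an adapted (Lyapunov) norm. After translating so that $x_*=0$, smoothness of $f$ and Taylor's theorem let us write $f(\x) = A\x + F(\x)$ with $F(\x) = O(\Vert\x\Vert^2)$ near the origin, which is exactly the form (\ref{GH.eqn}).

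For the stable case, I would first fix $\mu$ with $\rho(A) < \mu < 1$, where $\rho(A)<1$ is the spectral radius. Using the Jordan normal form of $A$ (or the identity $\rho(A) = \lim_k \Vert A^k\Vert^{1/k}$), one constructs a norm $\Vert\cdot\Vert_*$ on $\mathbb{R}^n$, equivalent to the Euclidean norm, for which $\Vert A\x\Vert_* \le \mu\,\Vert\x\Vert_*$ for all $\x$. Then, since $F(\x)=O(\Vert\x\Vert^2)$ and all norms on $\mathbb{R}^n$ are equivalent, there is $\delta>0$ with $\Vert F(\x)\Vert_* \le \tfrac{1-\mu}{2}\Vert\x\Vert_*$ whenever $\Vert\x\Vert_* \le \delta$, giving $\Vert f(\x)\Vert_* \le \lambda\,\Vert\x\Vert_*$ with $\lambda = \tfrac{1+\mu}{2} < 1$ on the ball $B_\delta = \{\Vert\x\Vert_* \le \delta\}$. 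Hence $B_\delta$ is forward invariant and $\Vert f^k(\x)\Vert_* \le \lambda^k\Vert\x\Vert_* \to 0$; translating back through norm equivalence yields both Lyapunov stability and local attraction, i.e. asymptotic stability, which in particular gives the claimed stability.

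For the unstable case, since every eigenvalue of $A$ satisfies $|\lambda|>1$ the matrix $A$ is invertible, so by the inverse function theorem $f$ has a smooth local inverse $g=f^{-1}$ near $x_*$ with $g_x(x_*,\alpha) = A^{-1}$, whose eigenvalues all have modulus $<1$. Applying the stable case to $g$ shows that $x_*$ is locally asymptotically stable for $g$: there is a neighborhood $U$ of $x_*$ on which $g^k\to x_*$ and $g(U)\subset U$. This is precisely the statement that $x_*$ is repelling for $f$ — any point of $U\setminus\{x_*\}$ has a forward $f$-orbit that must leave $U$ — so $x_*$ is unstable. Alternatively one may invoke Theorem \ref{thm:GH}: $x_*$ is hyperbolic, $f$ is topologically equivalent near $x_*$ to $\x\mapsto A\x$, and for the latter the origin is manifestly unstable because $A^{-1}$ is a contraction in its adapted norm; topological equivalence then transfers the instability.

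The main obstacle is the construction of the adapted norm $\Vert\cdot\Vert_*$ that turns $A$ into a contraction, together with the bookkeeping needed to absorb the quadratic remainder $F$ into the contraction constant while passing between $\Vert\cdot\Vert_*$ and the Euclidean norm; once this is in place, verifying the $\varepsilon$–$\delta$ conditions in the definitions of stability and instability is routine.
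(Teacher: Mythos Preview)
Your argument is correct and is in fact more than the paper provides: the paper states Theorem \ref{thm:hyperbolic.stability} without proof, presenting it as a standard consequence of the Grobman--Hartman theorem (Theorem \ref{thm:GH}) in the surrounding exposition. Your adapted-norm construction is the right elementary route and avoids relying on topological conjugacy; it also yields asymptotic stability in the contracting case, which is slightly stronger than the bare ``stable'' claimed. The inverse-map argument for the unstable case is clean and correct, and your alternative via Theorem \ref{thm:GH} is exactly what the paper has in mind. One small point worth making explicit if you write this out: in the instability argument, after showing $x_*$ is asymptotically stable for $g=f^{-1}$ on some $U$, the conclusion that the forward $f$-orbit of any $\x\in U\setminus\{x_*\}$ leaves $U$ follows because otherwise $g^k(f^k(\x))=\x$ would contradict $g^k\to x_*$ on $U$; this is routine but is the step that actually delivers the $\varepsilon$--$\delta$ failure in the definition of stability.
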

 
The linearization of the system near a non-hyperbolic fixed point is not sufficient to determine stability of the fixed point and we need to investigate higher order terms. The following theorem provides sufficient condition to check the stability of a smooth one dimensional system at a non-hyperbolic fixed point, 

\begin{theorem}\label{thm:nonhyperbolic}
Let $f:\mathbb{R}\times \mathbb{R} \to \mathbb{R}$. Suppose that $f(\cdot,\alpha)\in C^3(\mathbb{R};\mathbb{R})$ and $x_*$ is a non-hyperbolic fixed point of $f$, $x_*=f(x_*,\alpha)$. We have the following cases:\\
    Case 1: If $f_x(x_*,\alpha)=1$, then
\begin{enumerate}
    \item If $f_{xx}(x_*,\alpha)\neq 0$ then $x_*$ is semi-asymptotically stable from the left if $f_{xx}(x_*,\alpha),\alpha>0$ and semi-asymptotically stable from the right if $f_{xx}(x_*,\alpha)<0$
    \item if $f_{xx}(x_*,\alpha)=0$ and $f_{xxx}(x_*,\alpha)<0$ then $x_*$ is asymptotically stable
    \item if $f_{xx}(x_*,\alpha)=0$ and $f_{xxx}(x_*,\alpha)>0$ then $x_*$ is unstable
\end{enumerate}
    Case 2: If $f_x(x_*,\alpha)=-1$, then
    \begin{enumerate}
        \item If $\mathcal{S}f(x_*,\alpha)<0$, then $x_*$ is asymptotically stable
        \item If $\mathcal{S}f(x_*,\alpha)>0$, then $x_*$ is unstable. 
    \end{enumerate}
    where $\mathcal{S}f(x)$ is the \textit{Schwarzian derivative} of $f$
    \begin{eqnarray*}
    \mathcal{S}f(x,\alpha)=\frac{f_{xxx}(x,\alpha)}{f_{x}(x,\alpha)}-\frac{3}{2}\left[\frac{f_{xx}(x,\alpha)}{f_x(x,\alpha)} \right]^2.
    \end{eqnarray*}
\end{theorem}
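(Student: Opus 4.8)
The plan is to reduce the statement to a one-sided comparison of the local map with the identity, handling $f_x(x_*,\alpha)=+1$ directly and $f_x(x_*,\alpha)=-1$ by passing to the second iterate. Fix $\alpha$, write $g(x):=f(x,\alpha)$, and use the translated coordinate $u:=x-x_*$ with conjugated map $h(u):=g(x_*+u)-x_*$, so that $h(0)=0$, $h'(0)=g'(x_*)$, and every (one-sided) stability or attraction property of $x_*$ under $g$ is the corresponding property of $0$ under $h$. Since $x_*$ is non-hyperbolic and $n=1$, $g'(x_*)\in\{+1,-1\}$, so Cases 1 and 2 are exhaustive.

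For Case 1, $h'(0)=1$, and since $f(\cdot,\alpha)\in C^3$ we may write $h(u)=u+\tfrac12 b\,u^2+\tfrac16 c\,u^3+o(u^3)$ with $b=f_{xx}(x_*,\alpha)$, $c=f_{xxx}(x_*,\alpha)$. If $b\neq 0$, then for small $u\neq 0$ the quantity $h(u)$ has the sign of $u$ while $h(u)-u=\tfrac12 b\,u^2+o(u^2)$ has the sign of $b$; running through the sign combinations shows the orbit is monotonically pushed to $0$ on one side and away on the other, hence $x_*$ is semi-asymptotically stable from the left when $b>0$ and from the right when $b<0$ — statement 1 (whose hypothesis I read as $f_{xx}(x_*,\alpha)>0$, the printed ``$f_{xx}(x_*,\alpha),\alpha>0$'' being an evident typo). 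If $b=0$, then $h(u)/u=1+\tfrac16 c\,u^2+o(u^2)$, so on a small punctured interval $|h(u)|\le |u|(1-\eta u^2)$ when $c<0$ and $|h(u)|\ge |u|(1+\eta u^2)$ when $c>0$; in the first case $h$ preserves the sign of $u$, $|u_n|$ strictly decreases, is bounded below, and continuity forces its limit to be $0$ (asymptotic stability), while in the second the orbit escapes every small neighbourhood (instability) — statements 2 and 3.

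For Case 2, $h'(0)=-1$; I would apply Case 1 to the second iterate $G:=g\circ g$. The chain rule gives $G'(x_*)=g'(x_*)^2=1$; a further differentiation at the fixed point, using $g'(x_*)=-1$, gives $G''(x_*)=g''(x_*)g'(x_*)^2+g'(x_*)g''(x_*)=0$; one more differentiation and collection of terms gives $G'''(x_*)=-2g'''(x_*)-3[g''(x_*)]^2$, which by the definition of the Schwarzian is exactly $2\,\mathcal{S}g(x_*,\alpha)$. Thus $G$ sits in the $b=0$ branch of Case 1, so $x_*$ is asymptotically stable for $G$ iff $\mathcal{S}g(x_*,\alpha)<0$ and unstable iff $\mathcal{S}g(x_*,\alpha)>0$. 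Transferring to $g$: Lyapunov stability of $x_*$ for $G$ plus continuity of $g$ at $x_*$ (to control the odd iterates) yields Lyapunov stability for $g$, and $g^{2k+1}(x)=g(g^{2k}(x))\to g(x_*)=x_*$ transfers attraction, while a divergent $G$-orbit is a fortiori a divergent $g$-orbit; this gives Case 2 and completes the proof. The routine parts are the Taylor expansions and the interleaving/continuity argument; the only computation that requires genuine care is the identity $G'''(x_*)=2\,\mathcal{S}g(x_*,\alpha)$, and I expect its sign bookkeeping to be the main obstacle.
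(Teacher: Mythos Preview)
Your proof is correct, but there is nothing to compare it against: the paper does not prove Theorem~\ref{thm:nonhyperbolic}. It is stated in the appendix as a standard background result from one-dimensional dynamics, with the surrounding text citing textbooks (Elaydi, Kuznetsov, Wiggins) for the material in that section. Your argument---Taylor expansion and sign analysis for Case~1, then reduction of Case~2 to Case~1 via the second iterate $G=g\circ g$ together with the identity $G'''(x_*)=-2g'''(x_*)-3[g''(x_*)]^2=2\,\mathcal{S}g(x_*)$---is exactly the classical textbook proof, and all of your computations check out, including the sign bookkeeping you flagged as the main risk. Your reading of the evident typo ``$f_{xx}(x_*,\alpha),\alpha>0$'' as ``$f_{xx}(x_*,\alpha)>0$'' is also correct.
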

The Schwarzian derivative controls the higher order behavior in oscillatory systems.

\subsection{Codimension 1 bifurcations}
Until now we have kept the parameter of the system fixed. The study of the change in behavior of a dynamical system as the parameters are varied is called bifurcation theory. A \textit{bifurcation} occurs when the dynamics of the system at a parameter value $\alpha_1$ differ from the dynamics of the system at a different parameter value $\alpha_2$. Changing the parameter in a system may cause a stable fixed point to become unstable, the fixed point may split into multiple fixed points, or a new orbit may form. Each of these is an example of a bifurcation, although these are not the only things that can happen. The point at which a bifurcation occurs is called a \textit{bifurcation point}. More formally, the parameter $\alpha_*$ is called a \textit{bifurcation point} if arbitrarily close to it there is $\alpha$ such that $\x\mapsto f(\x,\alpha), \x\in \mathbb{R}^n$ is not topologically equivalent to $\x\mapsto f(\x,\alpha_*), \x\in \mathbb{R}^n$ in some domain $U\subset\mathbb{R}^n$. 

A necessary, but not sufficient condition for bifurcation of a fixed point to occur is for the fixed point to be nonhyperbolic. Theorem \ref{thm:GH} together with the implicit function theorem show that in a sufficiently small neighborhood of a hyperbolic fixed point $(\x_*,\alpha_*)$, for each $\alpha$ there is another unique fixed point with the same stability properties as $(\x_*,\alpha)$. So hyperbolic fixed points do not undergo local bifurcations. In the context of discrete systems, a local bifurcation can occur only at a fixed point $(\x_*,\alpha_*)$ when the Jacobian of the system at $(\x_*,\alpha_*)$ has an eigenvalue with modulus one. 

Perhaps surprisingly, there are only three types of generic bifurcations that can happen in a discrete system with one parameter. They are the limit point (LP), period doubling (PD), and Neimark-Sacker (NS) bifurcations. The reason for this is fairly simple. It turns out that there is a generic system, called the \textit{topological normal form}, that undergoes this bifurcation at the origin in the $(\x,\alpha)$-plane. For any other system that undergoes the same bifurcation and satisfies certain non-degeneracy conditions there is a local change of coordinates that transforms the system into the topological normal form.

In general the types of bifurcations that can occur are connected to the number of parameters in the system. The minimal number of parameters that must be changed in order for a particular bifurcation to occur in $f(\x,\alpha)$ is called the \textit{codimension} of the bifurcation. A bifurcation is called \textit{local} if it can be detected in any small neighborhood of the fixed point, otherwise its called \textit{global}. Global bifurcations are much harder to analyze and since we do not attempt to investigate them in this paper we will not expand upon them further. More detailed results on bifurcations in codimension 1 and 2 can be found in \cite{kuznetsov2008elements, kuznetsov2019numerical}. 

We will now formally define the sufficient conditions for a system to undergo a period doubling or a pitchfork bifurcation. The \textit{period doubling bifurcation} occurs when a system with a non-hyperbolic fixed point with multiplier $\lambda_1=-1$ satisfies certain non-degeneracy conditions. There are two types of PD bifurcations. In the \textit{super-critical} case, a stable $2$-cycle is generated when a fixed point becomes unstable. In the \textit{sub-critical} case, a stable fixed point turns unstable when it coalesces with an unstable $2$-cycle \footnote{This is true for a general $k$-cycle. In the \textit{super-critical} case, a stable $2k$-cycle is generated when a $k$-cycle becomes unstable. In the \textit{sub-critical} case, a stable $k$-cycle turns unstable when it coalesces with an unstable $2k$-cycle }. The conditions for a PD bifurcation to occur are given as follows
\begin{theorem}[Period Doubling Bifurcation]\label{thm:pdb}
    Suppose that a one-dimensional system 
    \begin{eqnarray*}
    x\mapsto f(x,\alpha),\quad x,\alpha\in \mathbb{R},
    \end{eqnarray*}
    with smooth $f$, has at $\alpha=0$ the fixed point $x_*=0$, and let $\lambda=f_x(0,0)=-1$. Assume the following non-degeneracy conditions are satisfied 
    \begin{enumerate}
        \item $1\slash 2 (f_{xx}(0,0))^2 +1\slash 3f_{xxx}(0,0)\neq 0$
        \item $f_{x\alpha}(0,0)\neq 0$
    \end{enumerate}
    Then there are smooth invertible coordinate and parameter changes transforming the system into 
    \begin{eqnarray}
        \eta \mapsto-(1+\beta)\pm \eta^3+O(\eta^4).
    \end{eqnarray}
\end{theorem}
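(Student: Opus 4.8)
The plan is to carry out the classical normal-form reduction for the flip (period-doubling) bifurcation, following \cite{kuznetsov2008elements}, in three stages: pin the fixed point at the origin for all nearby parameter values, reparametrize so that the multiplier is exactly $-(1+\beta)$, and then normalize the nonlinear terms by a smooth near-identity coordinate change, the only surviving obstruction being a resonant cubic term.

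First, since $\lambda = f_x(0,0) = -1 \neq 1$, the map $(x,\alpha)\mapsto f(x,\alpha)-x$ vanishes at $(0,0)$ with nonzero partial derivative $-2$ in $x$, so the implicit function theorem yields a smooth branch $x_0(\alpha)$ of fixed points with $x_0(0)=0$. After the translation $x\mapsto x-x_0(\alpha)$ we may assume $f(0,\alpha)\equiv 0$, and we expand
\[
f(x,\alpha) = \mu(\alpha)\,x + b(\alpha)\,x^2 + c(\alpha)\,x^3 + O(x^4),
\]
with $\mu(\alpha)=f_x(0,\alpha)$ smooth, $\mu(0)=-1$, $b(0)=\tfrac12 f_{xx}(0,0)$, $c(0)=\tfrac16 f_{xxx}(0,0)$. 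Non-degeneracy condition (2) gives $\mu'(0)=f_{x\alpha}(0,0)\neq 0$, so $\beta := -1-\mu(\alpha)$ is a smooth invertible reparametrization near the origin; in this new parameter $\mu = -(1+\beta)$, which is the desired linear part.

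Second, I apply a parameter-dependent coordinate change $x = \eta + e_2(\beta)\,\eta^2 + e_3(\beta)\,\eta^3$. Conjugating $f$ by it, the new quadratic coefficient is $b + (\mu-\mu^2)e_2$; since $\mu-\mu^2 = -2 \neq 0$ at $\mu=-1$, the choice $e_2(\beta)=b(\beta)/(\mu^2-\mu)$ eliminates it. The cubic term is \emph{resonant}: the corresponding factor $\mu-\mu^3$ vanishes at $\mu=-1$, so $e_3$ cannot remove it; what is left is a surviving cubic coefficient $c_1(\beta)$ satisfying, at criticality, $c_1(0) = b(0)^2 + c(0) = \tfrac14 f_{xx}(0,0)^2 + \tfrac16 f_{xxx}(0,0)$, which is exactly one half of the quantity in non-degeneracy condition (1). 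Hence $c_1(0)\neq 0$, and the rescaling $\eta\mapsto |c_1(\beta)|^{1/2}\eta$ (smooth in $\beta$ near $0$ because $c_1(0)\neq 0$) brings the cubic coefficient to $\operatorname{sgn} c_1(0)=\pm 1$. Composing the fixed-point translation, the two coordinate changes and the rescaling, together with the parameter change $\alpha\mapsto\beta$, transforms the system into $\eta\mapsto -(1+\beta)\eta \pm \eta^3 + O(\eta^4)$, as claimed.

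The only nontrivial point is the cubic bookkeeping in the third stage: one must check that removing the quadratic term feeds back into the cubic coefficient so that the residual coefficient equals $b(0)^2+c(0)$ (hence a nonzero constant times $\tfrac12 f_{xx}(0,0)^2 + \tfrac13 f_{xxx}(0,0)$). The most transparent way to fix the constant and the sign is to expand the second iterate $g(x,\beta)=f(f(x,\beta),\beta)$: at $\beta=0$ its linear part is $x$, its quadratic part vanishes automatically because the linear part of $f$ is $-x$, and its cubic coefficient equals $-2\big(b(0)^2+c(0)\big) = -\big(\tfrac12 f_{xx}(0,0)^2 + \tfrac13 f_{xxx}(0,0)\big)$; this simultaneously pins down the sign in the normal form and exhibits the nascent $2$-cycle of $f$ as a pair of new fixed points of $g$. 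Everything else — the implicit function theorem step and the parameter reparametrization — is routine given the two hypotheses.
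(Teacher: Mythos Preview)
Your proof is correct and follows the standard normal-form reduction for the flip bifurcation from \cite{kuznetsov2008elements}. The paper itself does not supply a proof of this theorem: it is stated in the appendix as a background result from the dynamical systems literature, with a reference to Kuznetsov, and is then illustrated by the logistic-map example rather than proved. So there is no ``paper's own proof'' to compare against; your argument is essentially the textbook derivation the paper is citing.

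One small remark on bookkeeping: the theorem as printed in the paper has a typo in the normal form --- the linear term should read $-(1+\beta)\eta$, not $-(1+\beta)$ --- and you have silently corrected this in your final display, which is the right thing to do. Your computation of the residual cubic coefficient $c_1(0)=b(0)^2+c(0)=\tfrac14 f_{xx}(0,0)^2+\tfrac16 f_{xxx}(0,0)$ and the cross-check via the second iterate are both correct; the latter is a clean way to confirm the constant and simultaneously exhibit the bifurcating $2$-cycle as the pitchfork of $f^2$, which is exactly the connection the paper exploits in Theorem~\ref{thm:pitchfork}.
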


An classical example of a period doubling bifurcation can be seen in the logistic map $f(x,\mu)=\mu x(1-x)$, for $x\in[0,1]$. The bifurcation occurs at the point $(x_*,\mu_*)=(2\slash 3,3)$. The logistic map has two fixed points. One fixed point is at $x=0$ and the other is at $x=(\mu-1)\slash \mu$. We will ignore the fixed point at $x=0$ since it is repelling for $\mu>1$. We look at the behavior of the system in a small neighborhood of $\mu_*=3$. For $\mu=2.9$, the fixed point  $x_*=(\mu-1)\slash \mu$ is a hyperbolic attracting fixed point since $\vert f_x(x_*,2.9)\vert = \vert2-\mu\vert <1$. For $\mu=3$ the fixed point $x_*=(\mu-1)\slash \mu$ is a non-hyperbolic fixed point since $f_x(x_*,2.9)=2-\mu=-1$. Checking the Schwarzian derivative shows that the fixed point is asymptotically stable. For $\mu=3.1$, $x_*=(\mu-1)\slash \mu$ becomes a repelling fixed point. The points in $(0,x_*)\cup(x_*,1)$ converge to the attracting 2-cycle $C=\{0.558014, 0.7645665\}$. A super-critical period doubling bifurcation has occurred in the system formed by the logistic map. As the parameter $\mu$ increases we see a stable fixed point degenerate and a stable 2-cycle is formed. 

\begin{figure}[htbp]
    \centering
    \includegraphics[width=10cm]{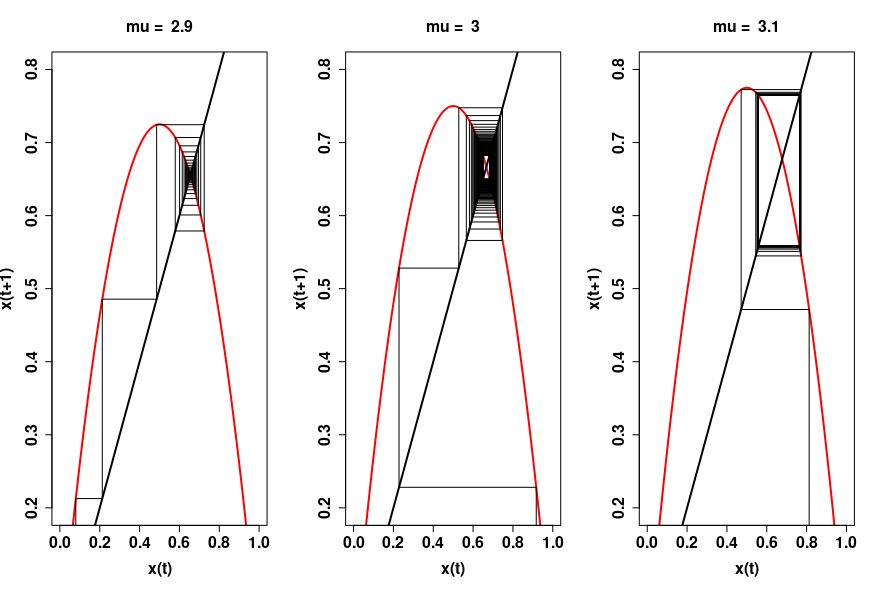}
    \caption{The above plots are cobweb diagrams for the logistic map $f(x,\mu)=\mu x(1-x)$, for $x\in[0,1]$, with parameters $\mu=2.9$, $\mu=3$ and $\mu=3.1$, respectively. For $\mu=2.9$ the system has one stable fixed point $x_*=(\mu-1)\slash \mu$. For $\mu=3$, the system has one non-hyperbolic fixed point $x_*=(\mu-1)\slash \mu$ which is asymptotically stable attracting; the plot was not iterated long enough to see convergence. For $\mu=3.1$, the system has a hyperbolic repelling fixed point $x_*=(\mu-1)\slash \mu$ and an asymptotically stable attracting two cycle $C=\{0.558014, 0.7645665\}$.}
    \label{fig:logistic.cobweb}
\end{figure}

The second iterate of a map that undergoes a PD bifurcation undergoes a bifurcation know as the \textit{pitchfork bifurcation}. A system that undergoes a \textit{super-critical} pitchfork bifurcation when a stable fixed point becomes unstable and two stable fixed points appear in the system. A system that undergoes a \textit{sub-critical} pitchfork bifurcation when two stable fixed points coalesce with an unstable fixed point, the unstable fixed point becomes stable as the parameter crosses the bifurcation point. Below we present extra details pertaining to the period doubling bifurcation and its relation to the pitchfork bifurcation. 

Consider the one-dimensional system 
\begin{eqnarray*}
x\mapsto -(1+\alpha)x+x^3=f(x,\alpha). 
\end{eqnarray*}
The map $f(x,\alpha)$ is invertible in a small neighborhood of $(0,0)$. The system has a fixed point at $x_*=0$ for all $\alpha$, with eigenvalue $-(1+\alpha)$. For small $\alpha<0$ the fixed point is hyperbolic stable and for $\alpha>0$ is it hyperbolic unstable. For $\alpha=0$ the fixed point is non-hyperbolic, but is asymptotically stable. 

Consider the second iterate of $f(x,\alpha)$
\begin{eqnarray*}
f^2(x,\alpha)&=&-(1+\alpha)f(x,\alpha)+(f(x,\alpha))^3\\
&=& (1+\alpha)^2x-\left[(1+\alpha)(2+2\alpha+\alpha^2)\right]x^3+O(x^5). 
\end{eqnarray*}
The second iterate has a trivial fixed point at $x_*=0$ and for $\alpha>0$ it has two non-trivial stable fixed points $x_{1}=(\sqrt{\alpha}+O(\alpha))$, $x_{1}=-(\sqrt{\alpha}+O(\alpha))$ that form a two cycle 
\begin{eqnarray*}
x_2=f(x_1,\alpha),\quad x_1=f(x_2,\alpha). 
\end{eqnarray*}
The conditions for a generic pitchfork bifurcation can be found in \cite{wiggins2003introduction}
\begin{theorem}[Pitchfork Bifurcation]\label{thm:pitchfork}
    For a system 
    \begin{eqnarray*}
    x\mapsto f(x,\alpha),\quad x,\alpha\in \mathbb{R}
    \end{eqnarray*}
    having non-hyperbolic fixed point at $x_*=0$, $\alpha_*=0$ with $f_x(0,0)=1$ undergoes a pitchfork bifurcation at $(x_*,\alpha_*)=(0,0)$ if 
    \begin{eqnarray*}
    f_\alpha(0,0)=0,\quad f_{xx}(0,0)=0, \quad f_{xxx}(0,0)\neq 0, \quad f_{x\alpha}(0,0)\neq 0. 
    \end{eqnarray*}
    A pitchfork bifurcation is \textit{super-critical} if $-f_{xxx}(x_*,\alpha_*)\slash f_{\alpha x}(x_*,\alpha_*)>0$ and \textit{sub-critical} if $-f_{xxx}(x_*,\alpha_*)\slash f_{\alpha x}(x_*,\alpha_*)<0$
\end{theorem}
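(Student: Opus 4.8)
The plan is to convert the fixed-point problem into a single scalar equation by factoring out the persistent trivial branch and then to extract the bifurcating branch with the implicit function theorem; this is the natural analogue of the normal-form reasoning behind Theorem~\ref{thm:pdb}, reflecting the period-doubling/pitchfork correspondence noted in the surrounding text. First I would introduce the displacement map $G(x,\alpha)=f(x,\alpha)-x$, whose zeros are exactly the fixed points of $f$. The hypotheses translate into $G(0,0)=0$, $G_x(0,0)=f_x(0,0)-1=0$, $G_\alpha(0,0)=0$, and $G_{xx}(0,0)=0$, together with the nondegeneracy conditions $G_{x\alpha}(0,0)=f_{x\alpha}(0,0)\neq0$ and $G_{xxx}(0,0)=f_{xxx}(0,0)\neq0$. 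The feature that distinguishes a pitchfork from a fold or transcritical bifurcation is the persistence of the trivial branch, $f(0,\alpha)=0$ for all $\alpha$ near $0$; this holds automatically in every application of Section~\ref{sec:main}, since $x_*=1\slash2$ is a fixed point of \eqref{fn:sigmoid.1} and \eqref{fn:sigmoid.2} for every $\beta$. Under this standing assumption $G(0,\alpha)\equiv0$, so Hadamard's lemma lets me write $G(x,\alpha)=x\,H(x,\alpha)$ with $H$ smooth, and the nontrivial fixed points are precisely the zeros of $H$.

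Next I would transfer the derivative data to $H$. Differentiating $G=xH$ yields $H(0,0)=G_x(0,0)=0$, $H_x(0,0)=\tfrac12 G_{xx}(0,0)=0$, $H_\alpha(0,0)=G_{x\alpha}(0,0)=f_{x\alpha}(0,0)\neq0$, and $H_{xx}(0,0)=\tfrac13 G_{xxx}(0,0)=\tfrac13 f_{xxx}(0,0)\neq0$. Since $H(0,0)=0$ and $H_\alpha(0,0)\neq0$, the implicit function theorem produces a unique smooth curve $\alpha=\alpha(x)$ with $\alpha(0)=0$ solving $H(x,\alpha(x))\equiv0$, which is the nontrivial fixed-point locus. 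Implicit differentiation gives $\alpha'(0)=-H_x(0,0)/H_\alpha(0,0)=0$ and $\alpha''(0)=-H_{xx}(0,0)/H_\alpha(0,0)=-f_{xxx}(0,0)\slash\big(3 f_{x\alpha}(0,0)\big)\neq0$, so $\alpha(x)=\tfrac12\alpha''(0)\,x^2+O(x^3)$ is a nondegenerate parabola tangent to the $\alpha$-axis. Hence on the side where $\mathrm{sgn}(\alpha)=\mathrm{sgn}(-f_{xxx}/f_{x\alpha})$ there are exactly two symmetric nontrivial fixed points emanating from $x=0$, and none on the other side, which is exactly the pitchfork configuration.

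The remaining step is the stability bookkeeping that assigns the super- versus sub-critical labels. Along the trivial branch the multiplier is $f_x(0,\alpha)=1+f_{x\alpha}(0,0)\,\alpha+o(\alpha)$, which crosses $1$ transversally because $f_{x\alpha}(0,0)\neq0$, so $x=0$ exchanges stability at $\alpha=0$. On the nontrivial branch, using $H=0$ there, one computes $f_x-1=x\,H_x(x,\alpha)=\tfrac13 f_{xxx}(0,0)\,x^2+O(x^3)$, so the new fixed points are attracting exactly when $f_{xxx}(0,0)<0$. Matching the side on which the branch exists against the side on which the origin is repelling shows that new stable points appear as the origin loses stability precisely when $-f_{xxx}(0,0)/f_{x\alpha}(0,0)>0$ (super-critical), the reversed sign giving the sub-critical picture. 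To upgrade this matching fixed-point and stability data into genuine local topological equivalence, I would then invoke the standard smooth reduction of $H$ to the normal form $x\mapsto x(1\pm\alpha\mp x^2)$ as in \cite{wiggins2003introduction}.

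The main obstacle is conceptual rather than computational: the six derivative conditions alone do not force a pitchfork, since a generic unfolding would yield an imperfect, disconnected bifurcation diagram. The crux of the argument is therefore the justification of the factorization $G(x,\alpha)=x\,H(x,\alpha)$, which rests on the persistence of the trivial branch $f(0,\alpha)\equiv0$ (equivalently, a $\mathbb{Z}_2$ reflection symmetry). I would make this hypothesis explicit and verify it in each application; once it is in hand, the branch extraction via the implicit function theorem and the subsequent sign tracking are routine.
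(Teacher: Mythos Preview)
The paper does not prove Theorem~\ref{thm:pitchfork}; it is stated as a reference result with the citation ``The conditions for a generic pitchfork bifurcation can be found in \cite{wiggins2003introduction}.'' There is therefore no proof in the paper to compare against.

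Your proposal is correct and is essentially the standard textbook argument (displacement map, factor out the trivial branch via Hadamard's lemma, apply the implicit function theorem to the reduced function $H$, then read off stability from the multiplier expansions). This is the approach one finds in \cite{wiggins2003introduction} and related sources, so in that sense you have reconstructed what the citation points to.

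Your most valuable observation is the one you flag at the end: as stated in the paper, the hypotheses are genuinely insufficient. The four derivative conditions $f_\alpha=f_{xx}=0$, $f_{xxx}\neq0$, $f_{x\alpha}\neq0$ do not by themselves force a pitchfork; a generic two-parameter unfolding of such a point is the imperfect bifurcation $x\mapsto x+\mu+\alpha x - x^3$, whose diagram is disconnected for $\mu\neq0$. The missing ingredient is exactly what you supply, namely persistence of the trivial branch $f(0,\alpha)\equiv0$ (equivalently a $\mathbb{Z}_2$ symmetry $f(-x,\alpha)=-f(x,\alpha)$), which is what makes the factorization $G=xH$ legitimate. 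You are right that this holds in every application in Section~\ref{sec:main}, since $x_*=1/2$ is a fixed point of \eqref{fn:sigmoid.1} and \eqref{fn:sigmoid.2} for all $\beta$, so the gap in the theorem statement causes no trouble for the paper's actual results. Making this hypothesis explicit, as you propose, is the correct fix.
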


An example of a pitchfork bifurcation can be seen in the second iteration of the logistic map $f^2(x,\mu)=\mu^2x(1-x)(1-\mu x(1-x))$, for $x\in[0,1]$. The bifurcation occurs at the point $(x_*,\mu_*)=(2\slash 3,3)$. For $\mu\leq 3$, the second iteration of the logistic map has the same fixed points as the first iteration. One fixed point is at $x=0$ and the other is at $x=(\mu-1)\slash \mu$. We will ignore the fixed point at $x=0$ since it is repelling for $\mu>1$. We look at the behavior of the system in a small neighborhood of $\mu_*=3$. For $\mu=2.9$, the fixed point  $x_*=(\mu-1)\slash \mu$ is a hyperbolic attracting fixed point since $\vert f^2_x(x_*,2.9)\vert <1$. For $\mu=3$ the fixed point $x_*=(\mu-1)\slash \mu$ is non-hyperbolic since $f^2_x(x_*,2.9)=2-\mu=1$. Checking the higher order derivative shows that the fixed point is asymptotically stable. For $\mu=3.1$, $x_*=(\mu-1)\slash \mu$ becomes a repelling fixed point. Using numerical methods we find two additional fixed points, $x_1=0.558014$  and $x_2=0.7645665$, both of which are attracting. A super-critical pitchfork bifurcation has occurred in the system formed by the logistic map. As the parameter $\mu$ increases we see a stable fixed point degenerates to an unstable fixed point and two stable fixed points. 

\begin{figure}[htbp]
    \centering
    \includegraphics[width=10cm]{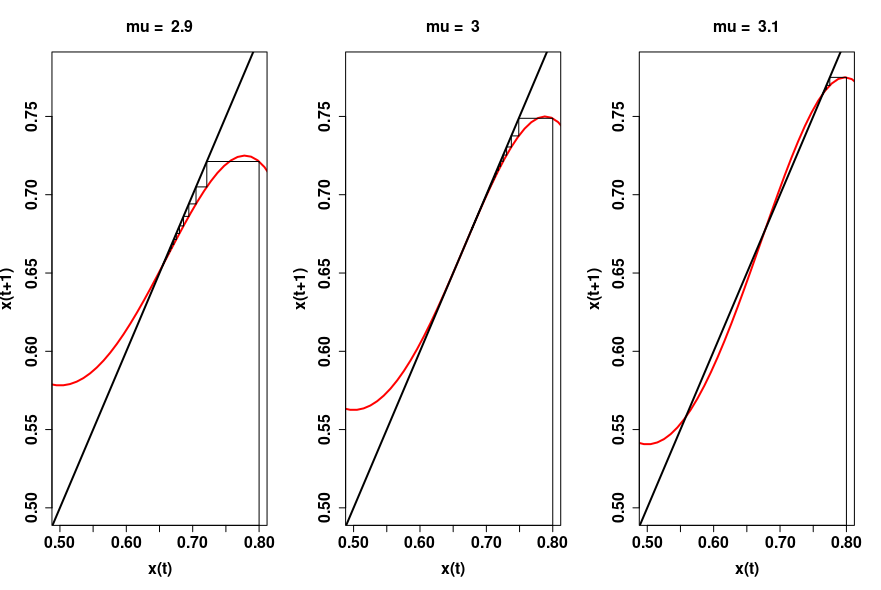}
    \caption{The above plots are cobweb diagrams for the second iterate of the logistic map $f(x,\mu)=\mu x(1-x)$, for $x\in[0,1]$, with parameters $\mu=2.9$ and $\mu=3.1$, respectively. For $\mu=2.9$ the system has one stable fixed point $x_*=(\mu-1)\slash \mu$. For $\mu=3.1$, the system has a hyperbolic repelling fixed point $x_*=(\mu-1)\slash \mu$ and two asymptotically stable attracting fixed points $x_1=0.0558014$ and $x_2=0.7645665$.}
    \label{fig:2logistic.cobweb}
\end{figure}

\bibliographystyle{plainnat}
\bibliography{references}


%
%
%
\end{document}